\newcommand{\N}[1]{\mathbb{N}^{#1}}
\newcommand{\R}[1]{\mathbb{R}^{#1}}
\renewcommand{\S}[1]{\mathbb{S}^{#1}}
\newcommand{\cA}{\mathcal A}
\newcommand{\cC}{\mathcal C}
\newcommand{\cH}{\mathcal H}
\newcommand{\cL}{\mathcal L}
\newcommand{\cM}{\mathcal M}
\newcommand{\cR}{\mathcal R}
\newcommand{\ED}{\mathcal{ED}}
\newcommand{\bB}{\mathbf B}
\newcommand{\bulk}{\mathrm{bulk}}
\newcommand{\de}{\mathrm d}
\newcommand{\seq}{\mathrm{seq}}
\newcommand{\surface}{\mathrm{surf}}
\newcommand{\eps}{\varepsilon}
\newcommand{\norma}[1]{\left\lVert#1\right\rVert}
\newcommand{\nc}{\mathrm{nc}}
\newcommand{\tr}{\operatorname{tr}}
\renewcommand{\geq}{\geqslant}
\renewcommand{\leq}{\leqslant}
\newcommand{\longrightharpoonup}{\relbar\joinrel\rightharpoonup}
\newcommand{\wto}{\rightharpoonup}
\newcommand{\lwto}{\longrightharpoonup}
\newcommand{\wsto}{\stackrel{*}{\rightharpoonup}}
\newcommand{\wSD}[1]{\stackrel[SD]{#1}\lwto}
\newcommand{\wHSDKp}{\stackrel[HSD_L]{p}\longrightharpoonup}
\newcommand{\wHSDKstar}{\stackrel[HSD_L]{*}\longrightharpoonup}
\newcommand{\average}{{\mathchoice {\kern1ex\vcenter{\hrule
height.4pt width 8pt depth0pt}
\kern-11pt} {\kern1ex\vcenter{\hrule height.4pt width 4.3pt
depth0pt} \kern-7pt} {} {} }}
\newcommand{\res}{\mathop{\hbox{\vrule height 7pt width .5pt depth
0pt\vrule height .5pt width 6pt depth0pt}}\nolimits}
\mathchardef\emptyset="001F
\providecommand{\U}[1]{\protect\rule{.1in}{.1in}}
\numberwithin{equation}{section}
\newtheorem{definition}{Definition}[section]
\newtheorem{theorem}[definition]{Theorem}
\newtheorem{lemma}[definition]{Lemma}
\newtheorem{proposition}[definition]{Proposition}
\theoremstyle{definition} {\newtheorem{remark}[definition]{Remark}}
\title[The variational modeling of hierarchical structured deformations]{The variational modeling of hierarchical structured deformations}%
\author[A.~C.~Barroso]{Ana Cristina Barroso}
\address[A.~C.~Barroso]{Departamento de Matem\'atica and CMAFcIO, 
Faculdade de Ci\^encias da Universidade de Lisboa,
Campo Grande, Edif\' \i cio C6, Piso 1,
1749-016 Lisboa, Portugal}
\email{acbarroso@ciencias.ulisboa.pt}
\author[J.~Matias]{Jos\'{e} Matias}
\address[J.~Matias]{Departamento de Matem\'atica, Instituto Superior T\'ecnico, Av.~Rovisco Pais, 1, 1049-001 Lisboa, Portugal}
\email{jose.c.matias@tecnico.ulisboa.pt}
\author[M.~Morandotti]{Marco Morandotti}
\address[M.~Morandotti]{Dipartimento di Scienze Matematiche ``G.~L.~Lagrange'', Politecnico di Torino, Corso Duca degli Abruzzi, 24, 10129 Torino, Italy}
\email{marco.morandotti@polito.it}
\author[D.~R.~Owen]{David R.~Owen}
\address[D.~R.~Owen]{Department of Mathematical Sciences, Carnegie Mellon University, 5000 Forbes Ave., Pittsburgh, PA 15213 USA}
\email{do04@andrew.cmu.edu}
\author[E.~Zappale]{Elvira Zappale}
\address[E.~Zappale]{Dipartimento di Scienze di Base e Applicate per l'Ingegneria, Sapienza Universit\`{a} di Roma, Via Antonio Scarpa, 16, 00161 Roma, Italy}
\email{elvira.zappale@uniroma1.it}
\dedicatory{In memory of Jerry Ericksen, whose broad and deep scientific contributions and leadership never cease to evoke admiration and to provide inspiration.}
\date{\today}
\subjclass[2020]{74A60, 
(49J45, 
74M99)}
\keywords{Structured deformations, hierarchies, relaxation, energy minimization, integral representation}
\begin{document}

\begin{abstract}
Hierarchical (first-order) structured deformations are studied from the variational point of view. 
The main contributions of the present research are the first steps, at the theoretical level, to establish a variational framework to minimize mechanically relevant energies defined on hierarchical structured deformations. 
Two results are obtained here: (i) an approximation theorem and (ii) the assignment of an energy to a hierarchical structured deformation by means of an iterative procedure. This has the effect of validating the proposal made in \cite{DO2019} to study deformations admitting slips and separations at multiple submacroscopic levels.
An explicit example is provided to illustrate the behavior of the proposed iterative procedure and relevant directions for future research are highlighted.
\end{abstract}

\maketitle

\tableofcontents


\section{Introduction}

Refinements of classical continuum theories of elastic bodies have the potential to broaden their range of applicability by capturing phenomena of particular interest or to adapt the theories to specific physical contexts.
Structured deformations \cite{DPO1993} provide a mathematical framework to capture the effects at the macroscopic level of geometrical changes, such as slips and separations, at submacroscopic levels. 
The variational formulation of the theory of structured deformations \cite{CF1997} set the basis for the enrichment of the classes of energies and force systems of interest in variational and field-theoretic descriptions of deformations of elastic bodies, making it unnecessary to commit at the outset to any prototypical mechanical theories, such as elasticity, plasticity, or fracture.

Following \cite{CF1997}, a (first-order) structured deformation is a pair $(g,G)\in SBV(\Omega;\R{d})\times L^1(\Omega;\R{d\times N})\eqqcolon SD(\Omega)$, where $g\colon\Omega\to\R{d}$ is the macroscopic deformation of the body and $G\colon\Omega\to\R{d\times N}$ is a tensor field associated with submacroscopic deformation.
In classical theory of mechanics, the field~$g$ and its gradient~$\nabla g$ alone characterize the deformation of the body; in the framework of structured deformations, the additional geometrical field~$G$ captures the contribution at the macroscopic scale of smooth submacroscopic changes, and the difference $\nabla g-G$ captures the contribution at the macroscopic scale of non-smooth submacroscopic changes, such as slips and separations, which are referred to as \emph{disarrangements} \cite{DO2002}. Accordingly,~$G$ is called the deformation without disarrangements, and, heuristically, the disarrangement tensor $M\coloneqq \nabla g-G$ is an indication of how non classical a structured deformation is: if~$g$ is a Sobolev field and $M=0$, then the deformation without disarrangements $G=\nabla g$ is simply the classical deformation gradient; if $M\neq0$, the effect of submacroscopic slips and separations, which are phenomena involving interfaces, is captured at the macroscopic level.
This fact will be made precise in the Approximation Theorem~\ref{appTHM}.

The classical variational methods for continuum mechanics rely on energy minimization. In \cite{CF1997}, the energy assigned to a structured deformation $(g,G)\in SD(\Omega)$ is the one arising from the most economical way to reach $(g,G)$ by means of a sequence $\{u_n\}\subset SBV(\Omega;\R{d})$ approximating $(g,G)$ in the following sense
\begin{equation}\label{101}
u_n\to g\quad\text{in $L^1(\Omega;\mathbb R^d)$}\qquad\text{and}\qquad \nabla u_n\wsto G\quad\text{in $\mathcal M(\Omega;\mathbb R^{d\times N})$,}
\end{equation}
where $\mathcal M(\Omega;\mathbb R^{d\times N})$ is the set of bounded matrix-valued Radon measures on $\Omega$; the convergence in \eqref{101} will be denoted by $u_n\wSD{*}(g,G)$.

We let the initial energy of a deformation $u\in SBV(\Omega;\mathbb R^d)$ be
\begin{equation}\label{103}
	E(u)\coloneqq \int_\Omega W(x,\nabla u(x))\,\de x+\int_{\Omega\cap S_u} \psi(x,[u](x),\nu_u(x))\,\de\cH^{N-1}(x),
\end{equation}
which is determined by the bulk and surface energy densities $W\colon\Omega\times\R{d\times N}\to[0,+\infty)$ and $\psi\colon\Omega\times\R{d}\times\S{N-1}\to[0,+\infty)$.  In formula~\eqref{103}, $\de x$ and $\de\mathcal H^{N-1}(x)$ denote the $N$-dimensional Lebesgue and $(N-1)$-dimensional Hausdorff measures, respectively; $[u](x)$ and $\nu_u(x)$ denote the jump of~$u$ and the normal to the jump set for each $x\in S_u$, the jump set of~$u$.

In mathematical terms, the process just described to assign an energy to a structured deformation $(g,G)\in SD(\Omega)$ reads 
\begin{equation}\label{102}
I(g,G)\coloneqq \inf\Big\{\liminf_{n\to\infty} E(u_n): u_n \in SBV(\Omega;\R{d}), u_n\wSD{*}(g,G)\Big\}.
\end{equation}
In the language of calculus of variations, the operation in \eqref{102} is known as \emph{relaxation}, and an important goal is to prove that the functional $I$ admits an integral representation, that is, that there exist functions $H\colon\Omega\times\R{d\times N}\times\R{d\times N}\to[0,+\infty)$ and $h\colon\Omega\times \R{d}\times\S{N-1}\to[0,+\infty)$ such that
\begin{equation}\label{104}
I(g,G)=\int_\Omega H(x,\nabla g(x),G(x))\,\de x+\int_{\Omega\cap S_g} h(x,[g](x),\nu_g(x))\,\de\cH^{N-1}(x).
\end{equation}
In the case where the initial bulk and surface energy densities $W$ and $\psi$ do not depend explicitly on the spatial variable~$x$, this was the main result in \cite{CF1997}, whose extension to include the explicit~$x$ dependence can be found in \cite[Theorem~5.1]{MMOZ}.

\smallskip

In the recent contribution \cite{DO2019}, Deseri and Owen extended the theory of \cite{DPO1993} to \emph{hierarchies} of structured deformations in order to include the effects of disarrangements at more than one submacroscopic level. 
This extension is based on the fact that many natural and man-made materials exhibit different levels of disarrangements. 
Muscles, cartilage, bone, plants, and biomedical materials are just some of the materials whose mechanical behavior can be addressed within the generalized field theory proposed in \cite{DO2019}. 
Since a structured deformation $(g,G)$ identifies two levels, the macroscopic one and the submacroscopic one, for $L\in\N{}$, we call an \emph{$(L+1)$-level (first-order) structured deformation} an $(L+1)$-tuple $(g,G_1,\ldots,G_L)$, where $g\colon\Omega\to\R{d}$ is the macroscopic deformation and $G_\ell\colon\Omega\to\R{d\times N}$, for $\ell=1,\ldots,L$, are the deformations without disarrangements at each of the~$L$ submacroscopic levels.

With the far-reaching goal of establishing a variational theory \emph{\`{a} la} Choksi and Fonseca \cite{CF1997} for hierarchies of structured deformations, the main results of the present research are: (i) an approximation theorem for $(L+1)$-level structured deformations (see Theorem~\ref{appTHMh} below) and (ii) a proposal for the assignment of an energy to the $(L+1)$-level structured deformation $(g,G_1,\ldots,G_L)$ by means of a well-posed recursive relaxation process (see Theorem~\ref{thm_RR} and the example in Section~\ref{sec_EX} below).

\smallskip

After stating the notation that we use throughout this work, the plan of the paper is the following: in Section~\ref{sec_SD} we collect some results coming from the theory of structured deformations, together with some recent generalizations; we also state and prove Theorem~\ref{thm_propdens} on the properties of the relaxed bulk and surface energy densities: this is the crucial result that allows the recursive relaxation process to continue through all levels.
In Section~\ref{sec_hierarchies}, we present our main results on hierarchies of structured deformations, namely the Approximation Theorem~\ref{appTHMh} and Theorem~\ref{thm_RR} on the well posedness of the recursive relaxation and, therefore, on the assurance that the energy assigned to an $(L+1)$-level structured deformation $(g,G_{1},\cdots ,G_{L})$ is
well-defined.
We also show an example in which explicit computations can be carried out.
Finally, Section~\ref{sec_CO} offers an outlook for future directions of research.

\subsection{Notation} 
We will use the following notations
\begin{itemize}
	\item $\mathbb N$ denotes the set of natural numbers without the zero element;
	\item $\Omega \subset \mathbb R^{N}$ is a bounded connected open set; 
	\item $\mathbb S^{N-1}$ denotes the unit sphere in $\mathbb R^N$;
	\item $Q\coloneqq (-\tfrac12,\tfrac12)^N$ denotes the open unit cube of $\mathbb R^{N}$ centred at the origin; for any $\nu\in\mathbb S^{N-1}$, $Q_\nu$ denotes any open unit cube in $\mathbb R^{N}$ with two faces orthogonal to~$\nu$; 
	\item ${\mathcal A}(\Omega)$ is the family of all open subsets of $\Omega $; 
	\item $\mathcal L^{N}$ and $\mathcal H^{N-1}$ denote the  $N$-dimensional Lebesgue measure and the $\left(  N-1\right)$-dimensional Hausdorff measure in $\mathbb R^N$, respectively; the symbol $\de x$ will also be used to denote integration with respect to $\mathcal L^{N}$; 
	\item $\mathcal M(\Omega;\mathbb R^{d\times N})$ is the set of finite matrix-valued Radon measures on $\Omega$; $\mathcal M ^+(\Omega)$ is the set of non-negative finite Radon measures on $\Omega$;
	given $\mu\in\mathcal M(\Omega;\mathbb R^{d\times N})$, 
	the measure $|\mu|\in\mathcal M^+(\Omega)$ 
	denotes the total variation of $\mu$;
	\item $SBV(\Omega;\mathbb R^d)$ is the set of vector-valued \emph{special functions of bounded variation} defined on $\Omega$. 
	Given $u\in SBV(\Omega;\mathbb R^d)$, its distributional gradient $Du$ admits the decomposition $Du=D^au+D^su=\nabla u\cL^N+[u]\otimes\nu_u\mathcal H^{N-1}\res S_u$, where $S_u$ is the jump set of~$u$, $[u]$ denotes the jump of~$u$ on $S_u$, and $\nu_u$ is the unit normal vector to $S_u$; finally, $\otimes$ denotes the dyadic product; 
	\item $L^p(\Omega;\mathbb R^{d\times N})$ is the set of matrix-valued $p$-integrable functions; 
	\item for $p\geq1$, $SD^p(\Omega)\coloneqq SBV(\Omega;\mathbb R^d)\times L^p(\Omega;\mathbb R^{d\times N})$ is the space of structured deformations $(g,G)$ (notice that $SD^1(\Omega)$ is the space $SD(\Omega)$ introduced in \cite{CF1997}); the norm in $SD(\Omega)$ is defined by $\norma{(g,G)}_{SD(\Omega)}\coloneqq\norma{g}_{BV(\Omega;\R{d})}+\norma{G}_{L^1(\Omega;\R{d\times N})}$;
	\item  $C$ represents a generic positive constant that may change from line to line.
\end{itemize}

\section{Preliminaries on structured deformations}\label{sec_SD}
In this section, we present the two results of great importance in the theory of structured deformations, namely the Approximation Theorem (see Theorem~\ref{appTHM} below) and the integral representation result of Theorem~\ref{thm_CF} generalizing that in \cite[Theorems~2.16 and~2.17]{CF1997}, see also \cite[Theorem~5.1]{MMOZ}. Since the energy for a structured deformation $(g,G)$ is defined by means of a relaxation process, it is necessary to show that the set on which the relaxation is performed is non empty. At the same time, starting from an initial energy~$E$ of integral type, Theorem~\ref{thm_CF} guarantees that also the relaxed energy $I$ is of integral type, and identifies the relaxed bulk and surface energy densities $H$ and $h$, respectively. 
The most significant result of Section~\ref{sec_SD}, Theorem~\ref{thm_propdens}, shows that the relaxed energy densities~$H$ and~$h$, themselves, can serve as initial energy densities for the relaxation process, thus allowing the iterative process in Section~\ref{sec_hierarchies} to provide a well-defined energy for $(L+1)$-level structured deformations.


\subsection{Approximation and definition of the energy}
A fundamental result in the theory of structured deformations is the Approximation Theorem \cite[Theorem~5.8]{DPO1993}, a counterpart of which was recovered in \cite[Theorem~2.12]{CF1997} in the $SBV$ framework and in \cite{S2015} in a broader framework. 
In simple terms, \cite[Theorem~2.12]{CF1997} states that given a structured deformation $(g,G) \in SBV(\Omega; \R{d}) \times L^1(\Omega; \R{d\times N})$, there exists a sequence $\{u_n\} \subset SBV(\Omega; \R{d})$ that approximates it, in the sense that the sequence of functions $n\mapsto u_n$ tends to the field~$g$, and the sequence $n\mapsto \nabla u_{n}$ of the absolutely continuous parts of their gradients tends to the (matrix-valued) field $G$, in suitable senses.
The proof of the approximation theorem rests on the following two results. 
\begin{theorem}[{\cite[Theorem~3]{AL}}]\label{Al}
Let $f \in L^1(\Omega; \R{d{\times} N})$. 
Then there exist $u \in SBV(\Omega; \R d)$, a Borel function $\beta\colon\Omega\to\R{d{\times} N}$, and a constant $C_N>0$ depending only on $N$ such that
\begin{equation}\label{817}
Du = f \,{\cL}^N + \beta \cH^{N-1}\res S_u, \qquad
\int_{S_u\cap\Omega} |\beta(x)| \, \de \cH^{N-1}(x) \leq C_N \lVert f\rVert_{L^1(\Omega; \R{d {\times} N})}.
\end{equation}
\end{theorem}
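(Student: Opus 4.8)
The plan is to realize~$f$ as the absolutely continuous gradient of a superposition of elementary ``cut-off affine'' pieces, one per cube of a fine grid, and then to reach a general $f\in L^1$ by summing a rapidly convergent series of such superpositions.

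\emph{Building block and simple functions.} First I would record the elementary computation that, for a cube $Q$ with centre $x_Q$ and side $\ell$ and for a constant matrix $A\in\R{d\times N}$, the function $v\coloneqq A(x-x_Q)\ind_Q$ lies in $SBV(\Omega;\R d)$, has $\nabla v=A\ind_Q$ a.e., and jumps only across $\partial Q$. Since $|[v]|\le|A|\,\tfrac{\sqrt N}{2}\,\ell$ on $\partial Q$ and $\cH^{N-1}(\partial Q)=2N\ell^{N-1}$, one gets $\int_{S_v}|[v]|\,\de\cH^{N-1}\le N^{3/2}|A|\,\ell^N=N^{3/2}\norma{A\ind_Q}_{L^1}$. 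Summing over the disjoint cubes of a grid, for any simple field $\phi=\sum_i A_i\ind_{Q_i}$ the function $u_\phi\coloneqq\sum_i A_i(x-x_{Q_i})\ind_{Q_i}$ satisfies $\nabla u_\phi=\phi$ a.e.\ and $\int_{S_{u_\phi}}|[u_\phi]|\,\de\cH^{N-1}\le N^{3/2}\norma{\phi}_{L^1}$ (the triangle inequality across shared faces only improves the estimate). This already proves the statement for simple~$f$, with $C_N=N^{3/2}$.

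\emph{Passage to general $f$ by a summable series.} Next I would approximate $f$ by simple functions $\phi_k$, constant on nested dyadic cubes of side $\delta_k\to0$, chosen along a subsequence so that $\norma{f-\phi_k}_{L^1}\le 2^{-k}\norma{f}_{L^1}$. Setting $v_1\coloneqq u_{\phi_1}$ and $v_k\coloneqq u_{\phi_k-\phi_{k-1}}$ for $k\ge2$, the partial sums $U_m\coloneqq\sum_{k=1}^m v_k$ telescope to $\nabla U_m=\phi_m$. The building block gives $\sum_k|D^s v_k|(\Omega)\le N^{3/2}\big(\norma{\phi_1}_{L^1}+\sum_{k\ge2}\norma{\phi_k-\phi_{k-1}}_{L^1}\big)\le C_N\norma{f}_{L^1}$, while the smallness of $\norma{\phi_k-\phi_{k-1}}_{L^1}$, together with the bounded diameter factor, makes $\norma{v_k}_{L^1}$ summable. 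Hence $U_m\to u$ in $L^1$ and $D^s U_m\to\sigma$ in the total variation norm; passing to the limit in $DU_m=\phi_m\cL^N+D^s U_m$ identifies $Du=f\cL^N+\sigma$. Strong convergence forces $\sigma$ to be singular with respect to $\cL^N$ (the absolutely continuous part of a measure depends norm-continuously on it, and each $D^s U_m$ is singular), so $\nabla u=f$ a.e.\ and $|\sigma|(\Omega)\le C_N\norma{f}_{L^1}$.

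\emph{The main obstacle: ruling out a Cantor part.} The delicate point, where a naive argument fails as the Cantor function illustrates, is to guarantee $u\in SBV$, i.e.\ that $\sigma$ carries no diffuse (Cantor) part. Here I would exploit that the convergence $D^sU_m\to\sigma$ is \emph{strong} (total variation), not merely weak-$*$: since each $D^s v_k$ is concentrated on $S_{v_k}$, a finite union of cube faces, the strong limit $\sigma$ is concentrated on $R\coloneqq\bigcup_k S_{v_k}$. As a countable union of sets of finite $\cH^{N-1}$ measure, $R$ is $\sigma$-finite with respect to $\cH^{N-1}$, and since the Cantor part of any $BV$ function vanishes on such sets, $\sigma$ is a pure jump measure. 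This is precisely what the Cantor function violates: its approximating jumps converge only weakly-$*$ and accumulate on the non-$\sigma$-finite Cantor set, so the total-variation summability secured by the rate $2^{-k}$ is exactly the mechanism that saves the argument. It then follows that $u\in SBV(\Omega;\R d)$, and writing $\sigma=\beta\,\cH^{N-1}\res S_u$ with $\beta=[u]\otimes\nu_u$ yields the claimed decomposition and bound.
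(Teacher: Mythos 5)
Your proof is correct, but it takes a genuinely different route from the paper, which does not prove this statement at all: it is quoted verbatim as Alberti's theorem \cite{AL}, whose original derivation goes through the much deeper Lusin-type theorem for gradients (a construction producing a function that is $C^1$ off a set of prescribed small measure, with sharp $L^p$ estimates), from which the $SBV$ statement is extracted. Your argument replaces that machinery with an elementary, self-contained construction: piecewise-affine blocks $A(x-x_Q)\ind_Q$ on dyadic cubes, whose jump energy you compute explicitly to be at most $N^{3/2}\norma{A\ind_Q}_{L^1}$, followed by a telescoping series with geometric rate so that the singular parts converge \emph{in total variation}. The two delicate points are exactly the ones you address: (i) the absolutely continuous part of a measure is $1$-Lipschitz with respect to the total variation norm, so the strong limit of the singular parts remains singular and $\nabla u=f$ a.e.; (ii) the limit singular measure is concentrated on the countable union $R$ of cube faces, which is $\sigma$-finite with respect to $\cH^{N-1}$, and the Cantor part of a $BV$ function vanishes on such sets (see \cite[Proposition~3.92]{AFP2000}), so $u\in SBV(\Omega;\R{d})$ and the singular measure is necessarily $[u]\otimes\nu_u\,\cH^{N-1}\res S_u$. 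Your observation that weak-$*$ convergence would not suffice (the Cantor function being the obstruction) correctly identifies why the rate $2^{-k}$ is the essential mechanism. What your route buys is a short proof of exactly the stated result with an explicit dimensional constant of order $N^{3/2}$; what Alberti's route buys is strictly more than is needed here, namely the Lusin property that $\nabla u$ coincides with $f$ pointwise off a small set, $C^1$ regularity there, and $L^p$ control of the gradient. The only points glossed over in your write-up are routine: cubes meeting $\partial\Omega$ (handled by extending $f$ by zero to a large cube and restricting at the end) and the existence of nested dyadic simple approximations with the prescribed rate (conditional expectations on dyadic grids plus a subsequence).
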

\begin{lemma}[{\cite[Lemma~2.9]{CF1997}}]\label{ctap}
Let $u \in BV(\Omega; \R d)$. Then there exist piecewise constant functions $\bar u_n\in SBV(\Omega;\R d)$  such that $\bar u_n \to u$ in $L^1(\Omega; \R d)$ and
\begin{equation}\label{818}
|Du|(\Omega) = \lim_{n\to +\infty}| D\bar u_n|(\Omega) = \lim_{n\to +\infty} \int_{S_{\bar u_n}} |[\bar u_n](x)|\; \de\cH^{N-1}(x).
\end{equation}
\end{lemma}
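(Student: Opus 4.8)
The plan is to reduce to the case of a smooth (in fact $W^{1,1}$) map by mollification, and then to quantize the values of such a map so that the jump set of the resulting piecewise constant functions lies along (approximations of) the level sets of $u$; this alignment of the interfaces with the level structure is exactly what will force the total variations to converge. Before doing anything, I would record two elementary reductions. First, for any piecewise constant $\bar u_n\in SBV(\Omega;\R d)$ one has $\nabla\bar u_n=0$ a.e.\ and $D\bar u_n=[\bar u_n]\otimes\nu_{\bar u_n}\,\cH^{N-1}\res S_{\bar u_n}$, so that $|D\bar u_n|(\Omega)=\int_{S_{\bar u_n}}|[\bar u_n]|\,\de\cH^{N-1}$ holds identically; the second equality in \eqref{818} is therefore automatic and only the spaces of functions, not the energies, need care. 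Second, once $\bar u_n\to u$ in $L^1$, lower semicontinuity of the total variation yields $|Du|(\Omega)\leq\liminf_n|D\bar u_n|(\Omega)$. Hence the entire content of the statement is the construction of a recovery sequence: piecewise constant $\bar u_n\to u$ in $L^1(\Omega;\R d)$ with $\limsup_n|D\bar u_n|(\Omega)\leq|Du|(\Omega)$.

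For the reduction to smooth maps I would set $u_\eps\coloneqq u*\rho_\eps$ on compactly contained subdomains (with the usual treatment near $\bd\Omega$), so that $u_\eps\to u$ in $L^1$ and $|Du_\eps|(\Omega)\to|Du|(\Omega)$; since strict convergence is metrizable on a fixed bounded domain, a diagonal argument then reduces matters to approximating a single smooth map strictly by piecewise constant functions. For such a map I would partition $\Omega$ into small cubes on which $\nabla u$ is nearly constant and, on each cube, discretize the values with mesh $\delta$, letting $\bar u$ be constant on the (nearly planar) slabs comprised between consecutive level sets of $u$. The decisive feature of this choice is that the jump set of $\bar u$ then sits on level sets of $u$, which are orthogonal to $\nabla u$; consequently $\int_{S_{\bar u}}|[\bar u]|\,\de\cH^{N-1}$ becomes a Riemann sum for the coarea integral $\int_{\mathbb R}\cH^{N-1}(\{u=t\})\,\de t=|Du|(\Omega)$, so that letting $\delta\to0$ (and refining the partition) produces $\limsup|D\bar u|(\Omega)\leq|Du|(\Omega)$, while $\|\bar u-u\|_{L^1}=O(\delta)\to0$.

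The main obstacle is precisely this convergence of the total variations, namely the upper bound $\limsup_n|D\bar u_n|(\Omega)\leq|Du|(\Omega)$. A naive construction, such as averaging $u$ over the cells of a fixed axis-aligned grid, produces jumps across cell faces whose orientations are prescribed in advance, and these generically overcount the variation: the discrete energy converges to an anisotropic surrogate of $|Du|$ rather than to $|Du|$ itself. Adapting the interfaces to the level sets of $u$ via the coarea formula is what removes this discrepancy, and it is here that one must be most careful in the vector-valued setting, where the interaction between the components of $u$ controls the geometry of the admissible jump directions; the remaining points — the behavior near $\bd\Omega$, the passage from the cubes of the partition to the whole of $\Omega$, and the diagonalization of the limits $\eps\to0$ and $\delta\to0$ — are routine and handled as in \cite{CF1997}.
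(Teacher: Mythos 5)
Your preliminary reductions are all correct: the second equality in \eqref{818} is automatic for piecewise constant maps, lower semicontinuity gives $|Du|(\Omega)\leq\liminf_n|D\bar u_n|(\Omega)$, and the mollification/diagonalization step is standard. Your level-set construction is then precisely the classical proof of this lemma \emph{for scalar functions}: when $d=1$, quantizing the values puts the interfaces on level sets and the coarea formula yields the sharp upper bound. (Note, for context, that the paper itself offers no proof: the lemma is quoted from \cite{CF1997}, and in the proof of Theorem~\ref{appTHM} it is only used through the estimate \eqref{819}, i.e.\ through the much weaker statement $\limsup_n|D\bar u_n|(\Omega)\leq C|Du|(\Omega)$.) The step you defer to the last sentence --- ``one must be most careful in the vector-valued setting'' --- is, however, not a technicality to be handled ``as in \cite{CF1997}''; it is the entire difficulty, and your construction does not survive it. For $d\geq 2$ the level sets of $u$ are generically sets of codimension $d$, not hypersurfaces, so there are no ``slabs between consecutive level sets'' and no coarea identity of the form $\int_{\mathbb R}\cH^{N-1}(\{u=t\})\,\de t=|Du|(\Omega)$. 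The natural fallback, quantizing each component separately, creates separate interfaces for each component and gives in the limit $\sum_{j=1}^d|Du^j|(\Omega)$, which is in general strictly larger than $|Du|(\Omega)$.

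Worse, this gap cannot be closed, because with the Euclidean (Frobenius) norm on $\R{d\times N}$ --- the convention used in \cite{CF1997} and throughout this paper --- the asserted equality is \emph{false} whenever $Du$ fails to be rank one $|Du|$-a.e. Indeed, $D\bar u_n$ is a pure jump measure, so for every $\Phi\in C^1_c(\Omega;\R{d\times N})$ whose values have operator norm at most one (i.e.\ $|\Phi(x)\xi|\leq|\xi|$ for all $\xi\in\R{N}$),
\begin{equation*}
\int_\Omega \scp{\Phi}{\de D\bar u_n}=\int_{S_{\bar u_n}}[\bar u_n]\cdot\Phi\,\nu_{\bar u_n}\,\de\cH^{N-1}\leq\int_{S_{\bar u_n}}|[\bar u_n]|\,\de\cH^{N-1}=|D\bar u_n|(\Omega),
\end{equation*}
while the left-hand side converges to $\int_\Omega\scp{\Phi}{\de Du}$ as soon as $\bar u_n\to u$ in $L^1(\Omega;\R{d})$. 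Taking the supremum over such $\Phi$ shows that $\liminf_n|D\bar u_n|(\Omega)$ is bounded below by the total variation of $Du$ computed with the \emph{nuclear} norm (sum of singular values), which strictly exceeds the Frobenius total variation wherever $Du$ has rank at least two. Concretely, for $u(x)=x$ on $\Omega=(0,1)^2$ one has $|Du|(\Omega)=\sqrt 2$, yet testing with $\Phi\approx\Id$ gives $\liminf_n|D\bar u_n|(\Omega)\geq 2$ for \emph{every} sequence of piecewise constant maps converging to $u$ in $L^1$. So the strongest true statement --- and the only one the present paper actually needs --- replaces the limits in \eqref{818} by an inequality with a dimensional constant, e.g.\ $\limsup_n|D\bar u_n|(\Omega)\leq\sum_{j=1}^d|Du^j|(\Omega)\leq\sqrt d\,|Du|(\Omega)$ via componentwise scalar quantization. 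A correct write-up of your argument should therefore carry out the scalar construction exactly as you describe, and then either settle for this inequality (adjusting the constant in \eqref{appEST} accordingly) or reinterpret the total variation in the nuclear norm; the Frobenius-norm equality you set out to prove is not provable.
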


We are now ready to state, and prove for the reader's convenience, the approximation theorem for structured deformations. Even though its proof can be found in the literature, see, \emph{e.g.}, \cite[Theorem~2.12]{CF1997} or \cite[Proposition~2.1]{MMOZ}, it is useful to show it here as a preparation for the proof of Theorem~\ref{appTHMh} below.
\begin{theorem}[Approximation Theorem]\label{appTHM}
There exists $C>0$ (depending only on the dimension $N$) such that for every $(g,G)\in SD(\Omega)$ there exists a sequence $\{u_n\}\subset SBV(\Omega;\R{d})$ converging to $(g,G)$ according to 
\begin{equation}\label{1000}
u_n \to g\quad\text{in $L^1(\Omega; \R{d})$}\qquad \text{and}\qquad \nabla u_n \wsto G\quad\text{in $\cM(\Omega;\R{d\times N})$}
\end{equation}
and such that, for all $n\in\N{}$,
\begin{equation}\label{appEST}
|D u_n|(\Omega)\leq C \|(g,G)\|_{SD(\Omega)}. 
\end{equation}
In particular, 
this implies that, up to a subsequence, 
\begin{equation}\label{centerline}
D^s u_n\wsto (\nabla g-G)\cL^N+D^s g\qquad \text{in $\cM(\Omega;\R{d\times N})$.}
\end{equation}
\end{theorem}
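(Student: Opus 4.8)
The plan is to build the approximating sequence explicitly by first correcting the absolutely continuous part of the gradient with an Alberti function and then using piecewise constant functions to dispose of the resulting surplus without disturbing that gradient.

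First I would set $f\coloneqq G-\nabla g\in L^1(\Omega;\R{d\times N})$ and apply Theorem~\ref{Al} to obtain $w\in SBV(\Omega;\R{d})$ and a Borel $\beta$ with $Dw=(G-\nabla g)\cL^N+\beta\,\cH^{N-1}\res S_w$ and $\int_{S_w}\norm{\beta}\,\de\cH^{N-1}\leq C_N\norma{G-\nabla g}_{L^1}$. The point of this choice is that $g+w\in SBV(\Omega;\R{d})$ satisfies $\nabla(g+w)=G$ exactly, so the target absolutely continuous gradient is already built in. Next, applying Lemma~\ref{ctap} to $w$, I would pick piecewise constant $\bar w_n\in SBV(\Omega;\R{d})$ with $\bar w_n\to w$ in $L^1$ and $\norm{D\bar w_n}(\Omega)\to\norm{Dw}(\Omega)$, and define $u_n\coloneqq(g+w)-\bar w_n$.

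With this definition the two convergences in \eqref{1000} are immediate: $u_n\to g$ in $L^1$ because $\bar w_n\to w$, and $\nabla u_n=\nabla(g+w)-\nabla\bar w_n=G-0=G$ since piecewise constant functions have vanishing absolutely continuous gradient, so $\nabla u_n\wsto G$ holds trivially (in fact with equality of measures). For the estimate \eqref{appEST} I would split $Du_n=G\cL^N+D^sg+D^sw-D\bar w_n$ and bound $\norm{Du_n}(\Omega)\leq\norma{G}_{L^1}+\norm{D^sg}(\Omega)+\norm{D^sw}(\Omega)+\norm{D\bar w_n}(\Omega)$; the Alberti bound controls $\norm{D^sw}(\Omega)\leq C_N\norma{G-\nabla g}_{L^1}$ and, since $\norm{D\bar w_n}(\Omega)\to\norm{Dw}(\Omega)=\norma{G-\nabla g}_{L^1}+\norm{D^sw}(\Omega)$, everything is ultimately controlled by $\norma{G}_{L^1}+\norm{Dg}(\Omega)\leq\norma{(g,G)}_{SD(\Omega)}$ with a constant depending only on $N$ through $C_N$.

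The one genuinely delicate point is that Lemma~\ref{ctap} controls only the limit of $\norm{D\bar w_n}(\Omega)$, whereas \eqref{appEST} is required for every $n$ with a purely dimensional constant, and a crude additive correction would spoil the homogeneity in $\norma{(g,G)}_{SD(\Omega)}$. I would resolve this by discarding an initial block of indices and relabeling: once $w$ is nonconstant we have $\norm{D\bar w_n}(\Omega)\leq 2\norm{Dw}(\Omega)$ for $n$ large, which keeps the bound proportional to $\norma{(g,G)}_{SD(\Omega)}$, while the case $Dw=0$ (i.e.\ $G=\nabla g$) is trivial by taking $u_n=g$. Finally, for \eqref{centerline} I would invoke the closure of the distributional gradient: from $u_n\to g$ in $L^1$ together with the uniform bound \eqref{appEST}, a compactness-and-uniqueness argument gives $Du_n\wsto Dg$ for the whole sequence; subtracting $\nabla u_n\cL^N=G\cL^N$ then yields $D^su_n\wsto Dg-G\cL^N=(\nabla g-G)\cL^N+D^sg$, which is precisely \eqref{centerline} (and in fact holds without even passing to a subsequence).
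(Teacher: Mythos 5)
Your proof is correct and follows essentially the same route as the paper's: apply Alberti's theorem (Theorem~\ref{Al}) to the difference of $G$ and $\nabla g$, correct $g$ by the resulting $SBV$ function, and cancel the surplus with the piecewise constant approximants of Lemma~\ref{ctap}, so that $\nabla u_n = G$ identically; your $w$ and $\bar w_n$ are just the negatives of the paper's $u$ and $\bar u_n$. Your treatment is in fact slightly more careful on two points the paper glosses over: you handle explicitly the fact that the total-variation bound only holds for a tail of the sequence (relabeling, plus the degenerate case $G=\nabla g$), and you observe via compactness-and-uniqueness that \eqref{centerline} holds for the whole sequence rather than only up to a subsequence.
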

\begin{proof}
Let $(g,G)\in SD(\Omega)$ and, by applying Theorem~\ref{Al} with $f\coloneqq \nabla g-G$, let $u\in SBV(\Omega;\R{d})$ be such that $\nabla u=\nabla g-G$.
Furthermore, let $\bar u_n\in SBV(\Omega;\R{d})$ be a sequence of piecewise constant functions approximating $u$, as per Lemma~\ref{ctap}.
Then the sequence of functions 
\begin{equation}\label{undef}
u_n\coloneqq g+\bar u_n-u
\end{equation}
is easily seen to approximate $(g,G)$ in the sense of \eqref{1000}. 
In fact, $ u_n \to g$ in $L^1$ and $\nabla u_n(x) = G(x)$ for $\cL^N$-a.e.~$x\in\Omega$.
Invoking the triangle inequality, the inequality in \eqref{817}, and \eqref{818}, we obtain for $C=3(1+C_N)$
\begin{equation}\label{819}
|Du_n|(\Omega)\leq C\|(g,G)\|_{SD(\Omega)}\qquad\text{for all $n\in\N{}$ sufficiently large,}
\end{equation}
so that \eqref{appEST} is proved for a suitable ``tail'' of the sequence $u_n$.
The convergence of $u_n\to g$ in $L^1$ implies that $Du_n$ converges to $Dg$ in the sense of distributions.
The uniform bound \eqref{819} ensures the existence of a (not relabeled) weakly-* converging subsequence such that $Du_n\wsto Dg$ in $\cM(\Omega;\R{d\times N})$, so that, since $\nabla u_n\wsto G$ in $\cM(\Omega;\R{d\times N})$, we have
$$
D^s u_n\wsto (\nabla g-G)\cL^N+D^s g\qquad \text{in $\cM(\Omega;\R{d\times N})$,}
$$
which is \eqref{centerline}. The proof is concluded.
\end{proof}

\begin{remark}\label{cvgp>1}
An inspection of the proof of Theorem \ref{appTHM} (see also \cite[Theorem~2.12]{CF1997} and \cite[Proposition~2.1]{MMOZ}) shows that the sequence $\{u_n\}$ in \eqref{undef} satisfies 
\begin{equation}\label{nablaun=G}
\nabla u_n=G.
\end{equation}
Furthermore, if $p>1$ and either the density $W$ in \eqref{103} satisfies a coercivity condition as in \eqref{pr3} below with $q=p$, or a generic bound of the type $\sup_n\{\lVert\nabla u_n\rVert_{L^p(\Omega;\R{d\times N})}\} \leq C$ is imposed on the sequences defining $I(g,G)$ in \eqref{102}, then the second convergence in \eqref{1000} can be strengthened to read $\nabla u_n \wto G \in L^p(\Omega;\mathbb R^{d \times N})$.
On the other hand, as observed in the Approximation Theorem, the existence of sequences approximating in this latter sense is guaranteed by  \eqref{nablaun=G}.
Hence, in the case $p>1$, for $(g,G) \in SD^p(\Omega)$, we use the notation $u_n \wSD{p} (g, G)$ to signify
\begin{equation}\label{1000Lp}
u_n \to g \quad\text{in $L^1(\Omega; \R{d})$}\qquad\text{and}\qquad \nabla u_n \wto G\quad \text{in $L^p(\Omega;\mathbb R^{d \times N})$.}
\end{equation}
\end{remark}

Let $p\geq1$ and let the initial energy $E\colon SBV(\Omega;\R{d})\to[0,+\infty]$ be defined by \eqref{103},
where $W\colon\Omega\times\R{d\times N}\to[0,+\infty)$ and $\psi\colon\Omega\times\R{d}\times\S{N-1}\to[0,+\infty)$ are the initial bulk and surface energy densities, respectively.
For $(g,G)\in SD^p(\Omega)$,  the energy  $I(g,G)$ is defined by
\begin{equation}\label{910}
I(g,G)\coloneqq \inf\Big\{ \liminf_{n\to\infty} E(u_n):  \{u_n\}\in\cR_p(g,G)\Big\},
\end{equation}
where 
\begin{equation}\label{R_p}
\cR_p(g,G)\coloneqq\Big\{\{u_n\}\subset SBV(\Omega;\R{d}): u_n\wSD{p}(g,G) 
\Big\}.
\end{equation}

\subsection{Generalization of the Choksi--Fonseca scheme}
Here we define the class of initial bulk and surface energy densities $W$ and $\psi$ that can be used to define the initial energy $E$ to be relaxed to structured deformations. Compared to the results in \cite{CF1997}, we add the explicit dependence on the space variable and we prove that the relaxed energy densities still belong to the class of initial energies. 
\begin{definition}[Energy densities]\label{gen_ass}
For $q\geq1$, we denote by $\ED(q)$ the collection of pairs $(W,\psi)$ of bulk and surface energy densities, where $W\colon \Omega\times\R{d\times N} \to [0, +\infty)$ and $\psi\colon \Omega\times\R{d}\times \S{N-1} \to [0, +\infty)$ are continuous functions satisfying the following conditions 
\begin{enumerate}
\item \label{W_cara}
 there exists a continuous function $\omega_W\colon[0,+\infty)\to[0,+\infty)$ with $\omega_W(s)\to 0$ as $s\to0^+$ such that, for every $x_0, x_1\in\Omega$ and $A\in \R{d\times N}$,
\begin{equation}\label{pr1}
|W(x_1,A)-W(x_0,A)|\leq\omega_W(|x_1-x_0|)(1+|A|^{q});
\end{equation}
\item \label{(W1)_p}  ($q$-Lipschitz continuity) there exists $C_W >0$ such that, for all $x\in\Omega$ and $A_1,A_2 \in \R{d\times N}$,
\begin{equation}\label{pr2}
|W(x,A_1) - W(x,A_2)| \leq C_W |A_1 - A_2| \big(1+|A_1|^{q-1}+|A_2|^{q-1}\big);
\end{equation}
\item \label{W3} there exists $A_0 \in \mathbb R^{d \times N}$ such that $W(\cdot, A_0)\in L^\infty(\Omega)$;
\item \label{W4} there exists $c_W>0$ such that, for every $(x,A)\in \Omega \times \mathbb R^{d\times N}$,
\begin{equation}\label{pr3}
c_W |A|^{q}-\frac{1}{c_W}\leq W(x,A);
\end{equation}
\item\label{psi_0} (symmetry) for every $x \in \Omega$, $\lambda \in \R{d}$, and $\nu \in \mathbb S^{N-1}$, 
\begin{equation*}
\psi (x, \lambda, \nu)= \psi (x,-\lambda, -\nu);
\end{equation*}
\item\label{(psi1)} there exist $c_\psi,C_\psi > 0$ such that, for all $x\in\Omega$, $\lambda \in \R{d}$, and $\nu \in \S{N-1}$,
\begin{equation}\label{pr4}
c_\psi|\lambda| \leq \psi(x,\lambda, \nu) \leq C_\psi|\lambda |;
\end{equation}
\item\label{(psi2)} (positive $1$-homogeneity) for all $x\in\Omega$, $\lambda \in \R{d}$, $\nu \in \S{N-1}$, and $t >0$
$$\psi(x,t\lambda, \nu) = t\psi(x, \lambda, \nu);$$
\item \label{(psi3)} (sub-additivity) for all $x\in\Omega$, $\lambda_1,\lambda_2 \in \R{d}$, and $\nu \in \S{N-1}$,
\begin{equation*}
\psi(x, \lambda_1 + \lambda_2, \nu) \leq \psi(x,\lambda_1, \nu) +\psi(x,\lambda_2, \nu);
\end{equation*}
\item \label{(psi4)} there exists a continuous function $\omega_\psi\colon[0,+\infty)\to[0,+\infty)$ with $\omega_\psi(s)\to 0$ as $s\to0^+$ such that, for every $x_0,x_1\in\Omega$, $\lambda \in \R{d}$, and $\nu \in \S{N-1}$,
\begin{equation}\label{pr5}
|\psi(x_1,\lambda,\nu)-\psi(x_0,\lambda,\nu)|\leq\omega_\psi(|x_1-x_0|)|\lambda|.
\end{equation}
\end{enumerate}
We say that $(W,\psi)\in\ED^{\nc}(q)$ if~$W$ satisfies only \eqref{W_cara}--\eqref{W3} while~$\psi$ satisfies \eqref{psi_0}--\eqref{(psi4)}. Clearly, $\ED(q)$ is a smaller class then $\ED^{\nc}(q)$.
For energy densities $W\colon\R{d\times N}\to[0,+\infty)$ and $\psi\colon\R{d}\times\S{N-1}\to[0,+\infty)$ (therefore \emph{not} explicitly depending on the space variable), we define the class $\ED_{CF}(q)$ of pairs $(W,\psi)$ satisfying (the obvious modifications of) \eqref{(W1)_p}, \eqref{W4}, \eqref{psi_0}, \eqref{(psi1)}, \eqref{(psi2)}, and \eqref{(psi3)}; as above, we say that $(W,\psi)\in\ED_{CF}^{\nc}(q)$ if~$W$ satisfies only \eqref{(W1)_p} while~$\psi$ satisfies only \eqref{psi_0}--\eqref{(psi3)}.
\color{blue}
\end{definition}
In this work, relevant values for $q$ will be either $q=p>1$  or $q=1$.

\begin{remark}\label{rem 3.2}
We point out the following facts:
\begin{itemize}
\item[(i)]	as observed in \cite[pages 298 and 305]{BDV1996}, the symmetry property \eqref{psi_0} in Definition~\ref{gen_ass} ensures that $\psi(x, \lambda, \nu)$ can be equivalently seen as a function of $x$ and $\lambda \otimes \nu$, for every $x \in \Omega, \lambda \in \mathbb R^d, \nu \in \mathbb S^{N-1}$.
\item[(ii)]
coercivity conditions \eqref{pr3} and \eqref{pr4} can rule out some physical relevant settings in particular some arising from fracture mechanics (see \cite[Remark~3.3]{CF1997}). Extra bounds on the norms of the admissible sequences can overcome these constraints to prove Theorem~\ref{thm_CF} below, as observed in the proof of \cite[Theorem~2.16]{CF1997}, see also Remark~\ref{rmk_coerc}.
Therefore, we can relax the request in property \eqref{(psi1)} of Definition~\ref{gen_ass} by requiring that
\begin{equation}\label{relaxedpsi1}
0 \leq \psi(x, \lambda, \nu) \leq C|\lambda |.
\end{equation}
On the other hand (as evident also in the proof of \cite[formula (4.22) and in the entire Section 5]{CF1997}), our coercivity assumptions on $W$ (or milder ones of linear type) and $\psi$ are crucial to deduce that densities $H$ and $h$ have the same type of properties as $W$ and $\psi$, and thus it will be possible to apply Theorem~\ref{thm_CF}, in turn, to $H$ and $h$.
\item[(iii)]
Notice that, if $\psi$ does not depend explicitly on~$x$, condition \eqref{relaxedpsi1} needs not be imposed as an assumption, since it follows immediately from the continuity and the positive $1$-homogeneity of $\psi$: indeed, continuity and property \eqref{(psi2)} of Definition~\ref{gen_ass} imply that $\psi(0,\nu)=0$ for every $\nu\in\S{N-1}$ and so, for every $(\lambda,\nu)\in(\R{d}\setminus\{0\})\times\S{N-1}$, we have
$$0\leq \psi(\lambda,\nu)=|\lambda|\psi\Big(\frac{\lambda}{|\lambda|},\nu\Big)\leq C|\lambda|,$$ 
where $C=\max_{\S{d-1}\times\S{N-1}}\psi$, which is bounded by continuity.
\end{itemize}
\end{remark}

For $A,B\in\R{d\times N}$, let 
$
a_A(x)\coloneqq Ax
$ 
be the linear function with (constant) gradient $A$ and let
\begin{equation}\label{T001}
\cC
^{\bulk}(A,B)\coloneqq \bigg\{u\in SBV(Q;\R{d}):u|_{\partial Q}=a_A|_{\partial Q}, \int_Q \nabla u(x)\,\de x=B, |\nabla u|\in L^p(Q) \bigg\}; 
\end{equation}
for $\lambda\in\R{d}$ and $\nu\in\S{N-1}$ let
\begin{equation}\label{T002}
\cC
^\surface(\lambda,\nu)\coloneqq \big\{u\in SBV(Q_\nu;\R{d}): u|_{\partial Q_\nu}=s_{\lambda,0,\nu}|_{\partial Q_\nu}, 
\nabla u(x)=0\;\text{for $\cL^N$-a.e.~$x\in Q_\nu$}\big\},
\end{equation}
where, for $\lambda_1,\lambda_2\in\R{d}$ and $\nu\in\S{N-1}$, the function $s_{\lambda_1,\lambda_2,\nu}$ is defined by
\begin{equation}\label{snu}
s_{\lambda_1,\lambda_2,\nu}(x)\coloneqq 
\begin{cases}
\lambda_1 & \text{if $x\cdot\nu\geq0$,} \\
\lambda_2 & \text{if $x\cdot\nu<0$.}
\end{cases}
\end{equation}
Given $x\in\Omega$, $U\in\cA(\Omega)$, and $u\in SBV(U;\R{d})$, we define the localized bulk and surface energies for the relaxation process
\begin{subequations}\label{Erelax}
\begin{eqnarray}
E_{x}^{\bulk}(u;U) & \!\!\!\!\coloneqq & \!\!\!\! \int_{U} W(x,\nabla u(y))\,\de y 
+\int_{S_u\cap U} \psi(x,[u](y),\nu_u(y))\,\de\cH^{N-1}(y), \label{Erelaxb}\\
E_{x}^\surface(u;U) & \!\!\!\!\coloneqq & \!\!\!\! \int_{S_u\cap U} \psi(x,[u](y),\nu_u(y))\,\de\cH^{N-1}(y), \label{Erelaxs}
\end{eqnarray}
\end{subequations}
and we define the relaxed bulk and surface energy densities $H\colon\Omega\times\R{d\times N}\times\R{d\times N}\to [0,+\infty)$ and $h\colon\Omega\times\R{d}\times\S{N-1}\to[0,+\infty)$, respectively, by
\begin{subequations}\label{rel_en_dens}
\begin{eqnarray}
H
(x,A,B)& \!\!\!\!\coloneqq &\!\!\!\! \inf\big\{E_{x}^{\bulk}(u;Q): 
 u\in\cC
 ^\bulk(A,B)\big\}, \label{906}\\
h
(x,\lambda,\nu) &\!\!\!\!\coloneqq &\!\!\!\! \inf\big\{ 
E_{x}^\surface(u;Q_\nu): u\in\cC
^\surface(\lambda,\nu)\big\}. \label{907}
\end{eqnarray}
\end{subequations}

The first result that we prove is the sequential characterization of the bulk energy density $H$ defined in \eqref{906}.
\begin{proposition}[Sequential characterization of $H$]\label{seq_char} Under Assumptions \ref{gen_ass}-\eqref{W_cara}, \eqref{(W1)_p}, \eqref{(psi1)}, \eqref{(psi3)}, and \eqref{(psi4)}, for every $(x,A,B)\in\Omega\times\R{d\times N}\times\R{d\times N}$, we have $H(x,A,B)=\widetilde H(x,A,B)$, where
\begin{equation}\label{906sc}
\widetilde H(x,A,B)\coloneqq  \inf\Big\{\liminf_{n\to\infty} E_{x}^{\bulk}(u_n;Q): \{u_n\}\in\cC^\bulk_{\seq}(A,B)\Big\}
\end{equation}
and 
\begin{equation}\label{Cbulkseq}
\cC^{\bulk}_{\seq}(A,B)\coloneqq \big\{\{u_n\}\subset SBV(Q;\R{d}): u_n\to a_A \hbox{ in } L^1(Q;\R{d}), \nabla u_n\rightharpoonup B \hbox{ in }L^p(Q;\mathbb R^{d \times N} )\big\}.
\end{equation}
\end{proposition}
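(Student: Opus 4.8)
The plan is to prove the two inequalities $\ti H(x,A,B)\le H(x,A,B)$ and $H(x,A,B)\le\ti H(x,A,B)$ separately, the point being that $\cC^\bulk(A,B)$ imposes the boundary datum $a_A$ and the mean $B$ \emph{exactly}, whereas $\cC^\bulk_\seq(A,B)$ in \eqref{Cbulkseq} imposes them only in the limit. Throughout, the spatial slot $x$ of the densities is frozen, so that all the analysis takes place on the fixed cube $Q$, and if $\liminf_n E_x^\bulk(u_n;Q)=+\infty$ for every admissible sequence there is nothing to prove.

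For $\ti H\le H$ I would turn a single competitor into an admissible sequence by a periodic blow-up. Given $u\in\cC^\bulk(A,B)$, set $v\coloneqq u-a_A$, which has zero trace on $\partial Q$, extend it $Q$-periodically, and define $u_n(x)\coloneqq a_A(x)+\tfrac1n v(nx)$. Then $\nabla u_n(\cdot)=\nabla u(n\,\cdot)$, so $u_n\to a_A$ in $L^1(Q;\R{d})$ because $\norma{u_n-a_A}_{L^1(Q)}=\tfrac1n\norma{v}_{L^1(Q)}\to0$, while the Riemann--Lebesgue lemma gives $\nabla u_n\wto\int_Q\nabla u=B$ in $L^p(Q;\R{d\times N})$; hence $\{u_n\}\in\cC^\bulk_\seq(A,B)$. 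The cube $Q$ splits into $n^N$ rescaled periodicity cells, and a change of variables on each cell shows that the bulk part of $E_x^\bulk(u_n;Q)$ equals $\int_Q W(x,\nabla u)\,\de y$ exactly, while the jump amplitudes are rescaled by $\tfrac1n$; the positive $1$-homogeneity of $\psi$ then makes the surface part equal to $\int_{S_u\cap Q}\psi(x,[u],\nu_u)\,\de\cH^{N-1}$ as well. Thus $E_x^\bulk(u_n;Q)=E_x^\bulk(u;Q)$ for every $n$, which yields $\ti H(x,A,B)\le E_x^\bulk(u;Q)$, and taking the infimum over $u\in\cC^\bulk(A,B)$ gives $\ti H\le H$.

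For the reverse inequality $H\le\ti H$ I would project an almost-optimal sequence onto the constraint set. Fix $\{u_n\}\in\cC^\bulk_\seq(A,B)$ with $\lim_n E_x^\bulk(u_n;Q)<+\infty$ (passing to a subsequence to make the $\liminf$ a limit). The first, and main, step is a De~Giorgi--type boundary adjustment. Slicing a thin shell near $\partial Q$ into many concentric sub-shells and using a pigeonhole (averaging) argument, one selects for each $n$ a good interface $\partial(t_nQ)$ on which gluing $u_n$ (inside) to $a_A$ (outside) creates a jump set whose extra surface cost is bounded, via the upper bound in \eqref{(psi1)} and the sub-additivity \eqref{(psi3)}, by (a constant times) the mass $\int_{\text{shell}}\norm{u_n-a_A}\,\de y$ divided by the shell width. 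For each fixed width, letting $n\to\infty$ sends this mass to zero since $u_n\to a_A$ in $L^1$, while the bulk contribution added in the outer region is at most $W(x,A)$ times its (vanishing) measure; a diagonal choice then produces $w_n$ with $w_n|_{\partial Q}=a_A$, $w_n\to a_A$ in $L^1$, $\nabla w_n\wto B$, and $E_x^\bulk(w_n;Q)\le E_x^\bulk(u_n;Q)+o(1)$. The second step restores the mean exactly: since $w_n|_{\partial Q}=a_A$ one has $\int_Q\nabla w_n=A-\int_{S_{w_n}}[w_n]\otimes\nu_{w_n}\,\de\cH^{N-1}\eqqcolon B_n$ with $B_n\to B$, and one corrects $w_n$ by a perturbation supported in a small interior cube carrying the residual mean $B-B_n\to0$, whose bulk cost is controlled by the $q$-Lipschitz bound \eqref{(W1)_p} and hence vanishes. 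The resulting $\ti w_n\in\cC^\bulk(A,B)$ satisfies $H(x,A,B)\le E_x^\bulk(\ti w_n;Q)\le E_x^\bulk(u_n;Q)+o(1)$; passing to the limit and then to the infimum over sequences yields $H\le\ti H$.

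The main obstacle is the boundary-matching step. The difficulty is to reset the trace to $a_A$ while simultaneously (i)~paying only an $o(1)$ surface price for the newly created interface and (ii)~not destroying the weak convergence $\nabla w_n\wto B$; the averaging over concentric sub-shells is precisely what decouples these two requirements, and the linear growth \eqref{(psi1)} together with the sub-additivity \eqref{(psi3)} is exactly what is needed to bound the interfacial term by an $L^1$ quantity that the hypothesis $u_n\to a_A$ forces to vanish. The subsequent mean-value correction is comparatively soft, although in the borderline case $p=1$ one must check that the corrector does not spoil the $L^1$ convergence, which is why the argument is cleanest when the competitors carry the uniform gradient bound discussed in Remark~\ref{cvgp>1}.
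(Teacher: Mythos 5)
Your proposal is correct and follows essentially the same route as the paper: the paper's proof of Proposition~\ref{seq_char} is a one-line citation of Proposition~3.1 in \cite{CF1997}, and that proof is precisely your two-step scheme --- the periodic rescaling $u_n\coloneqq a_A+\tfrac1n v(n\,\cdot)$ with Riemann--Lebesgue for $\widetilde H\leq H$, and De~Giorgi slicing near $\partial Q$ followed by an average-restoring corrector for $H\leq\widetilde H$.

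One caveat is worth recording. Your first step invokes the positive $1$-homogeneity \eqref{(psi2)} of $\psi$ (so that the rescaled jumps $\tfrac1n[u](n\,\cdot)$, carried by a jump set whose $\cH^{N-1}$-measure grows by the factor $n$, pay exactly the original surface energy), but \eqref{(psi2)} is \emph{not} among the hypotheses listed in the statement of Proposition~\ref{seq_char}. Under the listed assumptions alone, sub-additivity \eqref{(psi3)} gives $\psi(x,\lambda/n,\nu)\geq \tfrac1n\psi(x,\lambda,\nu)$, which is the wrong-way inequality, and the growth bound \eqref{(psi1)} only yields $\liminf_n E^{\bulk}_x(u_n;Q)\leq \int_Q W(x,\nabla u)\,\de y + C_\psi|D^su|(Q)$, which does not close the inequality $\widetilde H\leq H$. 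This is not a defect of your argument relative to the paper: the cited proof in \cite{CF1997} is carried out under standing hypotheses that include homogeneity, and every application of Proposition~\ref{seq_char} in the paper is to pairs in $\ED(p)$ or $\ED^{\nc}(q)$, for which \eqref{(psi2)} holds; but strictly speaking, \eqref{(psi2)} should be added to the hypothesis list for your (and the cited) proof to go through.
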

\begin{proof}
The proof follows the lines of that of \cite[Proposition~3.1]{CF1997}.
\end{proof}

\begin{theorem}[Integral representation theorem]\label{thm_CF}
Let $p\geq1$ and let $(W,\psi)\in\ED^{\nc}(q)$, with either $q=p$ or $q=1$, as in Definition~\ref{gen_ass}. 
For $(g,G)\in SD^p(\Omega)$, let $I(g,G)$ be defined by \eqref{910}. 
Then the functional $I\colon SD^p(\Omega)\to[0,+\infty)$ admits the integral representation 
\begin{equation}\label{int_rep_I}
I(g,G)=\int_\Omega H(x,\nabla g(x), G(x))\,\de x+\int_{\Omega\cap S_g} h(x,[g](x),\nu_g(x))\,\de \cH^{N-1}(x),
\end{equation}
where $H$ and $h$ are defined in \eqref{rel_en_dens}. 
\end{theorem}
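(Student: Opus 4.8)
The plan is to adapt the global method for relaxation, combined with the blow-up technique of Choksi and Fonseca \cite{CF1997}, and to incorporate the explicit spatial dependence of the densities exactly as in \cite[Theorem~5.1]{MMOZ}. The first step is to localize the functional: for every $U\in\cA(\Omega)$ I set
\[
I(g,G;U)\coloneqq\inf\Big\{\liminf_{n\to\infty} E(u_n;U):\{u_n\}\in\cR_p(g,G;U)\Big\},
\]
where $E(\cdot;U)$ is the restriction of \eqref{103} to $U$ and $\cR_p(g,G;U)$ is the localization of \eqref{R_p}. The Approximation Theorem~\ref{appTHM}, together with Remark~\ref{cvgp>1}, guarantees $\cR_p(g,G;U)\neq\emptyset$ and that every admissible sequence has gradients bounded in $L^p$ by construction; hence $I(g,G;U)<+\infty$, by the upper growth bounds on $W$ and $\psi$ furnished by \eqref{pr2}, \eqref{W3}, and \eqref{pr4}.

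The second step is to prove that $U\mapsto I(g,G;U)$ is the trace on $\cA(\Omega)$ of a finite Radon measure $\mu$, absolutely continuous with respect to $\cL^N+\cH^{N-1}\res S_g$. By the De Giorgi--Letta criterion it suffices to verify subadditivity, superadditivity, and inner regularity. Superadditivity follows from restricting a single competitor to disjoint open subsets; subadditivity is obtained by the usual cut-off and gluing of two near-optimal sequences across a thin transition layer, the surface energy created there being controlled by the $q$-Lipschitz bound \eqref{pr2} on $W$ and the linear growth \eqref{pr4} of $\psi$. Absolute continuity is then a consequence of the same upper growth bounds.

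The third step is to identify the Radon--Nikodym derivatives of $\mu$. By Besicovitch differentiation, for $\cL^N$-a.e.\ $x_0\in\Omega$ and $\cH^{N-1}$-a.e.\ $x_0\in S_g$ one expects
\[
\frac{\de\mu}{\de\cL^N}(x_0)=H\bigl(x_0,\nabla g(x_0),G(x_0)\bigr),\qquad \frac{\de\mu}{\de(\cH^{N-1}\res S_g)}(x_0)=h\bigl(x_0,[g](x_0),\nu_g(x_0)\bigr).
\]
Each identity is established by a blow-up on cubes centred at $x_0$ (with two faces orthogonal to $\nu_g(x_0)$ in the surface case): the approximate differentiability of $g$, respectively its jump structure, recovers the affine datum $a_A$ with $A=\nabla g(x_0)$, respectively the datum $s_{[g](x_0),0,\nu_g(x_0)}$ of \eqref{snu}, in the rescaled limit, while the continuity estimates \eqref{pr1} and \eqref{pr5} let one freeze the first variable, replacing $W(x,\cdot)$ and $\psi(x,\cdot,\cdot)$ by $W(x_0,\cdot)$ and $\psi(x_0,\cdot,\cdot)$ up to an infinitesimal error. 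The resulting lower bounds match the cell formulas \eqref{906} and \eqref{907}; the matching upper bounds are produced, via the sequential characterization of Proposition~\ref{seq_char} and a Vitali covering, by gluing near-optimal competitors of $\cC^\bulk(A,B)$ and $\cC^\surface(\lambda,\nu)$ that carry the prescribed boundary data.

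The delicate point is the bookkeeping of the gradient defect $\nabla g-G$. Since the admissible sequences satisfy $\nabla u_n\wto G$ rather than $\nabla u_n\wto\nabla g$, in the bulk blow-up the two constraints decouple into the $L^1$-limit datum $A=\nabla g(x_0)$ and the prescribed gradient average $B=G(x_0)$, with $A\neq B$ in general; this mismatch is precisely what the cell problem \eqref{906} encodes through the requirements $u|_{\partial Q}=a_A|_{\partial Q}$ and $\int_Q\nabla u(x)\,\de x=B$ in \eqref{T001}. Freezing the $x$-dependence through \eqref{pr1} and \eqref{pr5} while keeping these two constraints decoupled, and controlling by means of the two-sided linear bounds \eqref{pr4} on $\psi$ the interfacial energy carried by the defect, is the step in which the present structured-deformation setting departs from a standard lower-semicontinuity and relaxation argument.
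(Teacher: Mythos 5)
Your architecture---localization, the De Giorgi--Letta criterion to show that $U\mapsto I(g,G;U)$ is (the trace of) a Radon measure, Besicovitch differentiation plus blow-up to identify the two densities, and the moduli of continuity \eqref{pr1} and \eqref{pr5} to freeze the $x$-dependence---is exactly the machinery behind the paper's proof, which simply delegates to \cite[Theorem~5.1]{MMOZ} and to \cite[Theorems~2.16 and~2.17]{CF1997}, with the spatial dependence handled via the techniques of \cite{BBBF1996}. Your emphasis on the decoupling of the boundary datum $A=\nabla g(x_0)$ from the prescribed gradient average $B=G(x_0)$ is also the right point to isolate for the bulk density \eqref{906}.

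There is, however, a genuine gap on the surface side, and it is precisely the point the paper's proof singles out as the only substantive difference from the cited literature. The theorem allows $q=1$, i.e.\ bulk densities of linear growth; in that regime the relaxation result of \cite[Theorem~2.17]{CF1997} does \emph{not} yield \eqref{907}: the surface density there contains an additional contribution of the recession function $W^\infty$, produced by gradients of near-optimal sequences concentrating on the jump set in the blow-up. The reason this term is absent here, so that $h$ is given by \eqref{907} with the zero-gradient constraint of \eqref{T002}, is that membership in $\cR_p$ forces $\nabla u_n\wto G$ in $L^p$ with $p>1$ (see \eqref{1000Lp}), hence a uniform $L^p$ bound and equi-integrability of $\{|\nabla u_n|\}$, which rules out concentration of bulk energy on $(N-1)$-dimensional sets. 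Your sketch invokes the $L^p$ bound only to conclude $I(g,G;U)<+\infty$, never where it is actually needed: in the blow-up at $x_0\in S_g$ the rescaled functions are not competitors for \eqref{907} (their gradients do not vanish), and without the no-concentration argument the lower bound one obtains is a cell formula with unconstrained gradients and a $W^\infty$ term; for $q=1$ the identification with \eqref{907} would simply fail as written. A second, smaller caveat: in the De Giorgi--Letta step the gluing of two near-optimal sequences must be done $SBV$-style, by creating a jump along an interface whose energy is controlled by \eqref{pr4} and the $L^1$-closeness of the traces; a smooth cut-off produces the term $\nabla\varphi\otimes(u_n-v_n)$, which is small in $L^1$ but need not be bounded in $L^p$, so smooth interpolation can expel the glued sequence from the class $\cR_p$ in \eqref{R_p}.
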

\begin{proof}
This theorem can be found in \cite[Theorem~5.1]{MMOZ}. Here we just highlight some differences from the result in \cite{MMOZ}, which are due to the definition of the class $\cR_p$ in \eqref{R_p}.
We notice that, for $(W,\psi)\in\ED_{CF}^{\nc}(p)$, the result is \cite[Theorem~2.16]{CF1997}, which provides cell formulae for the relaxed bulk and surface energy densities $H$ and $h$, respectively, which are as in \eqref{rel_en_dens}, but with no $x$-dependence.
On the contrary, for $(W,\psi)\in\ED_{CF}^{\nc}(1)$, the result corresponds to\cite[Theorem~2.17]{CF1997}, which provides the cell formula for the relaxed bulk energy density $H$ as in \eqref{906}, and for the relaxed surface energy density $h$ a cell formula similar to that in \eqref{907}, but containing also a contribution of the recession function $W^\infty$ of the initial bulk energy density. This is not the case in our context because the class $\cR_p$ contains a uniform bound on the $L^p$ norms of the gradients of the sequences converging to $(g,G)$, and this is enough to prevent the appearance of $W^\infty$ in the cell formula for $h$.

As done in \cite[Theorem~5.1]{MMOZ}, and previously in \cite[Theorem~3.2]{BMMO2017}, the explicit $x$-dependence is dealt with thanks to conditions \eqref{W_cara} and \eqref{(psi4)} of Definition~\ref{gen_ass}, borrowing techniques from \cite{BBBF1996}.
\end{proof}
\begin{remark}\label{rmk_coerc}
Assuming coercivity of the bulk energy density $W$ (see property \eqref{W4} in Definition~\ref{gen_ass}) would make the proofs a bit easier. Nonetheless, we prefer to state the hypotheses of Theorem~\ref{thm_CF} without this condition for the sake of generality. The integral representation theorem proved in \cite{CF1997} is also stated without the coercivity assumption, which is temporarily added in the proof to obtain $L^p$ boundedness of the gradients of minimizing sequences, and then is removed. In our setting, the very definition of the class $\cR_p$ carries the needed boundedness, so that our Theorem~\ref{thm_CF} can be safely stated without assuming condition \eqref{W4} of Definition~\ref{gen_ass}.
\end{remark}

In the next theorem, we collect the properties of the relaxed energy densities $H$ and $h$ defined in \eqref{rel_en_dens}. We start from a pair $(W,\psi)\in\ED(q)$
and we prove that, via \eqref{rel_en_dens}, they generate a pair $(H^B,h)\in\ED(1)$. To show this, the coercivity of the initial bulk energy density $W$ is not only needed, but will also hold true for the density $H^B(x,A)=H(x,A,B)$.
\begin{theorem}[Properties of the relaxed energy densities]\label{thm_propdens}
Let $p>1$ and let $(W,\psi)\in\ED(p)$. Let $H\colon\Omega\times\R{d\times N}\times\R{d\times N}\to[0,+\infty)$ and $h\colon\Omega\times\R{d}\times\S{N-1}\to[0,+\infty)$ be the functions defined in \eqref{rel_en_dens}. 
Let us define, for $B\in\R{d\times N}$, the function $H^B\colon\Omega\times\R{d\times N}\to[0,+\infty)$ by $(x,A)\mapsto H^B(x,A)\coloneqq H(x,A,B)$.
Then $(H^B,h)\in\ED(1)$.
Moreover, the function $H$ is $p$-Lipschitz continuous in the third component, namely for every $(x,A)\in \Omega \times \mathbb R^{d \times N}$ there exists a constant $C>0$ such that for every $B_1,B_2\in\R{d\times N}$,
\begin{equation}\label{H_B}
|H(x,A,B_1)-H(x,A,B_2)|\leq C|B_1-B_2|(1+|B_1|^{p-1}+|B_2|^{p-1}).
\end{equation}
\end{theorem}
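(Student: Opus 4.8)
The plan is to verify, condition by condition, that $(H^B,h)$ satisfies the requirements of Definition~\ref{gen_ass} for the class $\ED(1)$ (that is, with $q=1$), and then to establish the $p$-Lipschitz bound \eqref{H_B} separately. Two features will be used repeatedly: first, the admissible classes $\cC^\bulk(A,B)$ and $\cC^\surface(\lambda,\nu)$ in \eqref{T001}--\eqref{T002} do not depend on~$x$, so that $x$-continuity can always be tested on one and the same competitor; second, the coercivity \eqref{pr3} of~$W$ and the lower bound in \eqref{pr4} furnish a priori bounds on near-optimal competitors, namely any $u$ with $E_x^\bulk(u;Q)\leq H(x,A,B)+1$ has $\int_Q|\nabla u|^p\,\de x$ and $\int_{S_u\cap Q}|[u]|\,\de\cH^{N-1}$ bounded in terms of $H(x,A,B)$. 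The surface properties of~$h$ --- symmetry \eqref{psi_0}, the two-sided bound \eqref{(psi1)}, positive $1$-homogeneity \eqref{(psi2)}, subadditivity \eqref{(psi3)}, and $x$-continuity \eqref{(psi4)} --- are inherited from the corresponding properties of~$\psi$ by the standard competitor manipulations of \cite{CF1997,MMOZ}: rescaling a competitor by $t>0$ for \eqref{(psi2)}, gluing competitors for $\lambda_1$ and $\lambda_2$ along parallel slabs for \eqref{(psi3)}, a reflection for \eqref{psi_0}, the single-jump competitor $s_{\lambda,0,\nu}$ together with \eqref{pr4} for \eqref{(psi1)}, and comparison of one competitor at $x_0$ and $x_1$ using \eqref{pr5} and the a priori surface bound for \eqref{(psi4)}.

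The core of the argument is that $H^B$ is an $\ED(1)$ bulk density, i.e.\ that both the growth and the continuity orders in the gradient variable drop from~$p$ to~$1$. For coercivity \eqref{W4} (with $q=1$) I would note that, for $u\in\cC^\bulk(A,B)$, Jensen's inequality and \eqref{pr3} give $\int_Q W(x,\nabla u)\,\de x\geq c_W|B|^p-\tfrac1{c_W}$, while $D^su(Q)=A-B$ forces $\int_{S_u\cap Q}\psi\geq c_\psi|A-B|$; since $p>1$, the map $t\mapsto c_W t^p-c_\psi t$ is bounded below by a constant depending only on $c_W,c_\psi,p$, so that $H(x,A,B)\geq c_\psi|A|-C$ with $C$ independent of~$B$. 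For the matching \emph{linear} upper bound I would test with the explicit competitor having $\nabla u\equiv B$ and a single hyperplane jump carrying $A-B$, whose energy is $W(x,B)+C_\psi|A-B|$; this yields $H^B(x,A)\leq C_{x,B}(1+|A|)$ and, via $W(\cdot,A_0)\in L^\infty$ and the $x$-uniform bound $W(x,A)\leq C(1+|A|^p)$ coming from \eqref{pr2}, also property \eqref{W3}.

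For Lipschitz continuity in~$A$ (condition \eqref{(W1)_p} with $q=1$) I would take a near-optimal $u_2\in\cC^\bulk(A_2,B)$ and build $u_1\in\cC^\bulk(A_1,B)$ by modifying the boundary datum from $a_{A_2}$ to $a_{A_1}$ in a thin layer, inserting a jump that shifts $Du(Q)$ from $A_2$ to $A_1$ while leaving the absolutely continuous average~$B$ essentially unchanged; since $\psi$ is bounded linearly by \eqref{pr4}, the extra surface energy is at most $C|A_1-A_2|$ with $C$ independent of $|A_1|,|A_2|,|B|$ --- this is exactly why the continuity order is~$1$ and $H^B$ lands in $\ED(1)$ and not merely in $\ED(p)$. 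For $x$-continuity \eqref{W_cara} (with $q=1$) I would compare a single near-optimal competitor at $x_0$ and $x_1$, bound the difference by $\omega_W(|x_1-x_0|)\int_Q(1+|\nabla u|^p)+\omega_\psi(|x_1-x_0|)\int_{S_u}|[u]|$ via \eqref{pr1} and \eqref{pr5}, and then invoke the \emph{linear} upper bound of the previous paragraph together with the a priori estimates to bound both integrals by $C(1+|A|)$, producing the modulus $\omega(|x_1-x_0|)(1+|A|)$ with the correct linear factor.

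Finally, for \eqref{H_B} I would fix $(x,A)$, take a near-optimal $u_2\in\cC^\bulk(A,B_2)$, and add a function~$\phi$ with constant absolutely continuous gradient $B_1-B_2$ and a single compensating hyperplane jump of total mass $-(B_1-B_2)$, so that $\phi$ vanishes on $\partial Q$ and $u_2+\phi\in\cC^\bulk(A,B_1)$; the bulk increment is controlled by \eqref{pr2} as $C|B_1-B_2|\int_Q(1+|\nabla u_2|^{p-1}+|B_1-B_2|^{p-1})$, the surface increment by $C_\psi|B_1-B_2|$, and the a priori bound $\int_Q|\nabla u_2|^p\,\de x\leq C_{x,A}(1+|B_2|^p)$ then gives \eqref{H_B} after symmetrizing in $B_1,B_2$. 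The step I expect to be most delicate is the bookkeeping of growth orders, so that all the $A$-dependence collapses to the linear scale while the $p$-scale is retained in~$B$: the linear upper bound on $H^B$ in~$A$ --- obtained by freezing the bulk gradient at~$B$ and charging the whole mismatch $A-B$ to the linearly priced surface energy --- is precisely what forces the $x$-continuity modulus to carry the factor $(1+|A|)$ rather than $(1+|A|^p)$, and the hypothesis $p>1$ is essential in the coercivity step to absorb the $|B|$-term.
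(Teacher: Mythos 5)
Your overall strategy (condition-by-condition verification, testing $x$-continuity on a single competitor, and tracking growth orders so that $A$-dependence collapses to the linear scale) is the same as the paper's, and your coercivity argument --- Jensen's inequality plus absorbing $c_\psi|B|$ into $c_W|B|^p$ using $p>1$ --- is essentially the paper's estimate \eqref{H coercivity}. However, several of your explicit constructions fail because they are incompatible with the exact constraints in the class \eqref{T001}. A competitor in $\cC^{\bulk}(A,B)$ must have trace \emph{exactly} $a_A$ on $\partial Q$ and $\int_Q\nabla u(y)\,\de y$ \emph{exactly} equal to $B$. A jump concentrated on a single hyperplane with normal $\nu$ contributes a singular part of the form $m\otimes\nu$, which is rank one; so ``a single hyperplane jump carrying $A-B$'' (and likewise the single compensating jump of mass $-(B_1-B_2)$ for your function $\phi$ in the proof of \eqref{H_B}) does not exist unless the relevant matrix difference is rank one. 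Staircase constructions distributing a general matrix over many interfaces attain the affine boundary datum only in the $L^1$ limit, not in the sense of traces, and your thin-layer modification for Lipschitz continuity in $A$ leaves the average of $\nabla u$ only ``essentially'' equal to $B$, violating the exact average constraint. The paper avoids all of this at one stroke: by Proposition~\ref{seq_char}, $H=\widetilde H$ with $\widetilde H$ as in \eqref{906sc}, defined over \emph{sequences} required only to satisfy $u_n\to a_A$ in $L^1$ and $\nabla u_n\wto B$; the needed test sequences are then supplied by the Approximation Theorem~\ref{appTHM}, which gives $\nabla u_n=B$ exactly and $|Du_n|(Q)\leq C\norma{(a_A,B)}_{SD(Q)}$. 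This is how the paper proves the upper bound in \eqref{ineq0}, property \eqref{W3}, and \eqref{H_B} (there one adds the approximating sequence for $(0,B_1-B_2)$ to a recovery sequence for $H(x,A,B_2)$). Without invoking the sequential characterization, or substantially reworking your competitors, your constructions do not produce admissible test functions. (For Lipschitz continuity in $A$ the paper in fact constructs nothing: it appeals to \cite[Proposition~5.2]{CF1997}.)

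There is a second genuine omission: membership of $(H^B,h)$ in $\ED(1)$ requires $h$ to be a \emph{continuous} function of all its variables, and your argument never addresses joint continuity in $(\lambda,\nu)$. Lipschitz continuity in $\lambda$ does follow from subadditivity and the upper bound in \eqref{pr4}, but continuity in $\nu$ is the delicate point, since the cell $Q_\nu$ itself rotates with $\nu$. The paper handles this by invoking \v{S}ilhav\'y's result \cite{S17}: thanks to the symmetry \eqref{psi_0}, $h$ is the restriction to rank-one matrices of the function $\Phi$ in \eqref{alternate formula}, which is positively $1$-homogeneous and subadditive, hence convex on $\R{d\times N}$, hence locally Lipschitz and in particular continuous. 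Some argument of this kind (or an explicit comparison of competitors on $Q_\nu$ and $Q_{\nu'}$) is needed; it does not follow from the rescaling, gluing, and reflection manipulations you list.
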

\begin{proof}
We start by proving that there exist constants $\bar c_H,\bar C_H>0$ such that for every $x\in\Omega$ and $A,B\in\R{d\times N}$ there holds
\begin{equation}\label{ineq0}
\bar c_H(|A|+|B|^p)-\frac1{\bar c_H}\leq H(x,A,B)\leq \bar C_H(1 +|A| +|B|^p).
\end{equation}
The sequence $\{u_n\}$ provided by Theorem~\ref{appTHM} to approximate the structured deformation $(a_A,B)$ belongs to $\cC^{\bulk}_{\seq}(A,B)$, so that, by Proposition~\ref{seq_char}, we obtain the upper bound in \eqref{ineq0} by means of the following chain of inequalities 
\[\begin{split}
H(x,A,B)\leq&\, \liminf_{n\to\infty} \bigg(\int_Q W(x,\nabla u_n(y))\,\de y+\int_{S_{u_n}\cap Q} \psi(x,[u_n](y),\nu_{u_n}(y))\,\de\cH^{N-1}(y) \bigg) \\
\leq&\, \liminf_{n\to\infty} \big(W(x,B)+C_\psi |D^s u_n|(Q)\big) \leq \bar C_H(1+|A|+|B|^p),
\end{split}\]
where we have used the fact that $\nabla u_n=B$ and \eqref{pr4} in the second inequality, and \eqref{pr2} jointly with property~\eqref{W3} in the third one; the constant $\bar C_H$ depends on~$N$, on the norm of $A_0$, on $\norma{W(\cdot,A_0)}_{L^\infty(\Omega)}$, and on $C_W$ and $C_\psi$. 

On the other hand, by \eqref{906sc} there exists a recovery sequence converging to $(a_A, B)$ in the sense of \eqref{1000Lp}.
Thus, by the coercivity in \eqref{pr3} and \eqref{pr4} and the lower semicontinuity of the $L^p$ norm and of the total variation of measures, we obtain that, for $\tilde c_H\coloneqq \min\{c_W, c_\psi\}$,
\begin{equation}\label{H coercivity}
H(x,A,B)\geq \tilde c_H(|B|^p+ |A-B|)-\frac{1}{\tilde c_H} \geq \bar c_H(|A|+ |B|^p)- \frac{1}{\bar c_H},
\end{equation} 
for a suitable constant $\bar c_H\in(0,\tilde c_H)$, so that \eqref{ineq0} is proved.

To show that $H^B$ satisfies property \eqref{W_cara} of Definition~\ref{gen_ass} with $q=1$, we argue in the following way. 
Let $x_0,x_1\in\Omega$ and $A\in\R{d\times N}$ be given, and let $\eps>0$. 
By definition \eqref{906}, there exists $u^\eps\in\cC^{\bulk}(A,B)$ such that
\begin{equation}\label{ineq1}
E_{x_1}^{\bulk}(u^\eps; Q)< H(x_1,A,B)+ \eps=H^B(x_1,A)+\eps.
\end{equation} 
Therefore, since the same $u^\eps$ is also a competitor for $H(x_0,A,B)$, we have, defining the function $\tilde\omega_H(s)\coloneqq \max\{\omega_W(s),\omega_\psi(s)\}$,
\[\begin{split}
H^B(x_0,A)-&\, H^B(x_1,A)=H(x_0,A,B)-H(x_1,A,B)\leq E_{x_0}^{\bulk}(u^\eps;Q)-E_{x_1}^{\bulk}(u^\eps;Q)+\eps \\
\leq&\, \omega_W(|x_0-x_1|)\int_Q (1+|\nabla u^\eps(y)|^p)\,\de y + \omega_\psi(|x_0-x_1|)\int_{S_{u^\eps}\cap Q}  |[u^\eps](y)|\,\de\cH^{N-1}(y)+\eps \\
\leq&\, \tilde\omega_H(|x_0-x_1|)\bigg(1+\frac1{c_W^2}+\frac1{c_W}\int_Q W(x_1,\nabla u^\eps(y))\,\de y\\
&\, \phantom{\tilde\omega_H(|x_0-x_1|)\bigg(}+\frac1{c_\psi}\int_{S_{u^e}\cap Q} \psi(x_1,[u^\eps](y),\nu_{u^\eps}(y))\,\de\cH^{N-1}(y)\bigg)+\eps \\
\leq&\, \tilde\omega_H(|x_0-x_1|)\max\Big\{1+\frac1{c_W^2},\frac1{c_W},\frac1{c_\psi}\Big\} \big(1+E_{x_1}^{\bulk}(u^\eps;Q)\big)+\eps \\
\leq&\, \tilde\omega_H(|x_0-x_1|)\max\Big\{1+\frac1{c_W^2},\frac1{c_\psi}\Big\} \big(1+H(x_1,A,B)+\eps\big)+\eps \\
\leq&\, \tilde\omega_H(|x_0-x_1|)\max\Big\{1+\frac1{c_W^2},\frac1{c_\psi}\Big\} \big(\bar C_H(1+|A|+|B|^p)+\eps\big)+\eps,
\end{split}\]
where we have used, in sequence, \eqref{ineq1} in the first line, \eqref{pr1} and \eqref{pr5} in the second line, the coercivity \eqref{pr3} and \eqref{pr4} in the third line, \eqref{ineq1} again in the fifth line, and finally \eqref{ineq0} in the sixth line. By exchanging the roles of $x_0$ and $x_1$ and by the arbitrariness of $\eps>0$, we have obtained that $H^B$ satisfies \eqref{pr1} with $q=1$ and modulus of continuity $\omega_{H^B}(s)\coloneqq C(c_W,c_\psi,\bar C_H,|B|^p)\, \tilde\omega_H(s)$.

To show that $H^B$ satisfies property \eqref{(W1)_p} of Definition~\ref{gen_ass}, we recall that by \cite[Proposition~5.2]{CF1997} the relaxed bulk energy density is uniformly continuous in the $A$-variable. A closer inspection of the proof of that proposition, together with properties \eqref{W_cara} and \eqref{(psi4)} of Definition~\ref{gen_ass} to control the $x$-dependence, shows that for every $(x,B)\in\Omega\times\R{d\times N}$ the function $\R{d\times N}\ni A\mapsto H(x,A,B)$ is indeed Lipschitz continuous, so that 
\begin{equation}\label{LCA}
|H^B(x,A_1)-H^B(x,A_2)|\leq C|A_1-A_2|,
\end{equation}
which is \eqref{pr2} with $q=1$. Continuity with respect to $A$ follows. (We note, in passing, that \eqref{pr2} is trivially satisfied also with a general $q\geq1$.)

To show that $H^B$ satisfies property \eqref{W3} of Definition~\ref{gen_ass}, let $A_0\in\R{d\times N}$ be the matrix that makes the property hold true for $W$ and let $\{u_n\}\subset SBV(\Omega;\R{d})$ be the sequence provided by Theorem~\ref{appTHM} to approximate the structured deformation $(a_{A_0},B)$. Then, since $\{u_n\}\in\cC_{\seq}^{\bulk}(A_0,B)$, we apply Proposition~\ref{seq_char} and for $x\in\Omega$ we have
\[\begin{split}
H^B(x,A_0)=&\, \widetilde H(x,A_0,B)\leq \liminf_{n\to\infty}\bigg(\int_Q W(x,\nabla u_n(y))\,\de y+\int_{S_{u_n}\cap Q} \psi(x,[u_n](y),\nu_{u_n}(y))\,\de\cH^{N-1}(y)\bigg) \\
\leq&\, \liminf_{n\to\infty}\big(W(x,B)+C_\psi|D^s u_n|(Q)\big) \\
\leq&\, W(x,A_0)+C_W|A_0-B|\big(1+|A_0|^{p-1}+|B|^{p-1}\big)+C_\psi C\norma{(a_{A_0},B)}_{SD(Q)}, 
\end{split}\]
where we have used \eqref{pr2} with $q=p$ and \eqref{appEST}. This shows that $\norma{H^B(\cdot,A_0)}_{L^\infty(\Omega)}\leq C+\norma{W(\cdot,A_0)}_{L^\infty(\Omega)}$, so property \eqref{W3} of Definition~\ref{gen_ass} holds for $H^B$ with the same $A_0$ that was good for $W$.

Inequality \eqref{H coercivity} shows immediately that \eqref{pr3} holds for $H^B$ with $q=1$, and properties \eqref{W_cara} and \eqref{(W1)_p} of~$H^{B}$ imply
that $H^{B}$ is continuous.

For what concerns the surface energy density, we observe that, by its very definition \eqref{907}, $h$ satisfies properties \eqref{psi_0}, \eqref{(psi1)} with the constants $c_h\coloneqq c_\psi$ and $C_h\coloneqq C_\psi$, \eqref{(psi2)}, and \eqref{(psi3)} of Definition~\ref{gen_ass}.
In particular, in view of \eqref{psi_0}, it can be equivalently expressed as a function of $x$ and $\lambda \otimes \nu$, see (i) in Remark~\ref{rem 3.2}.
Therefore, as observed in \cite[Theorem 2.3]{S17}, the function $h$ can be seen as the restriction to rank-one matrices of a function $\Phi\colon\Omega \times\R{d \times N}\ni (x,M)\mapsto \Phi(x,M) \in [0, +\infty)$.

In particular \cite[equations (15), (16)]{S17} provide, for every $x \in \Omega$, 
\begin{align}
	\Phi(x, M) =\inf \left\{\sum\limits_{i=1}^{m}\psi (x,\lambda _{i},\nu _{i}):m\in 
	\mathbb{N}\backslash \{0\},(\lambda _{i},\nu _{i})\in \mathbb{R}^{d}\times 
	\mathbb{S}^{N-1}, 
	i =1,\cdots ,m, \, \sum\limits_{i=1}^{m}\lambda _{i}\otimes \nu
	_{i}=M\right\},  \label{alternate formula}
\end{align}%
together with the subadditivity and the positive $1$-homogeneity of the map $M\mapsto\Phi(x,M)$. Therefore, $\Phi(x,\cdot)$ is convex on $ \R{d\times N}$, for every $x\in\Omega$, and hence it is locally Lipschitzian and thus continuous, see \cite[Section 6.3, Theorem 1]{EG}.

To show that $h$ satisfies property \eqref{(psi4)} of Definition~\ref{gen_ass}, we argue in the following way. 
Let $x_0,x_1\in\Omega$ and $(\lambda,\nu)\in\R{d}\times\S{N-1}$ be given, and let $\eps>0$. 
By definition \eqref{907}, there exists $u^\eps\in\cC^{\surface}(\lambda,\nu)$ such that 
\begin{equation}\label{ineq2}
E_{x_1}^{\surface}(u^\eps; Q_\nu)< h(x_1,\lambda,\nu)+ \eps.
\end{equation} 
Therefore, since the same $u^\eps$ is also a competitor for $h(x_0,\lambda,\nu)$, we have, defining $\tilde\omega_h(s)\coloneqq c_\psi^{-1}\omega_\psi(s)$,
\[\begin{split}
h(x_0,\lambda,\nu)-&\,h(x_1,\lambda,\nu)\leq E_{x_0}^{\surface}(u^\eps;Q_\nu)-E_{x_1}^{\surface}(u^\eps;Q_\nu)+\eps \\
\leq&\, \omega_\psi(|x_0-x_1|)\int_{S_{u^\eps}\cap Q_\nu} |[u^\eps](y)|\,\de\cH^{N-1}(y)+\eps \\
\leq&\, \tilde\omega_h(|x_0-x_1|) \int_{S_{u^\eps}\cap Q_\nu} \psi(x_1,[u^\eps](y),\nu_{u^\eps}(y))\,\de\cH^{N-1}(y)+\eps= \tilde\omega_h(|x_0-x_1|) E_{x_1}^{\surface}(u^\eps;Q_\nu)+\eps \\
\leq&\, \tilde\omega_h(|x_0-x_1|)(h(x_1,\lambda,\nu)+\eps)+\eps \leq \tilde\omega_h(|x_0-x_1|)(C_\psi|\lambda|+\eps)+\eps, \\
\end{split}\]
where we have used, in sequence, \eqref{ineq2} in the first line, \eqref{pr5} in the second line, the coercivity \eqref{pr4} in the third line, and \eqref{ineq2} again  and \eqref{pr4} in the fourth line. By exchanging the roles of $x_0$ and $x_1$ and by the arbitrariness of $\eps>0$, we have obtained that $h$ satisfies \eqref{pr5} with  modulus of continuity $\omega_{h}(s)\coloneqq C_\psi\tilde\omega_h(s)$.
Continuity of $h$ in all of its variables follows. 

We prove now \eqref{H_B}.
Let $(x,A)\in\Omega\times\R{d\times N}$ be fixed, let $B_1,B_2\in\R{d\times N}$, and let $\{u_n\}\subset SBV(Q;\R{d})$ be a recovery sequence for $H(x,A,B_2)$. Let $\{v_n\}\subset SBV(Q;\R{d})$ be the sequence provided by Theorem~\ref{appTHM} to approximate $(0,B_1-B_2)$, so that the sequence $\{u_n+v_n\}\in\cC^{\bulk}_{\seq}(A,B_1)$.
Then we have, for $C_p$ a positive constant depending only on $p$,
\[\begin{split}
H(x,A,B_1)-&\,H(x,A,B_2)\leq \liminf_{n\to\infty} \big(E_{x}^{\bulk}(u_n+v_n;Q)-E_x^{\bulk}(u_n;Q)\big) \\
\leq&\, \liminf_{n\to\infty} \bigg(C_W|B_1-B_2|\int_Q (1+|\nabla u_n(y)+B_1-B_2|^{p-1}+|\nabla u_n(y)|^{p-1})\,\de y \\
&\,\phantom{\liminf_{n\to\infty} \bigg(} +\int_{S_{v_n}\cap Q} \psi(x,[v_n](y),\nu_{v_n}(y))\,\de\cH^{N-1}(y)\bigg) \\
\leq&\, \liminf_{n\to\infty} \bigg(C_W|B_1-B_2|\Big(1+ C_p(|B_1|^{p-1} +|B_2|^{p-1})+(1+C_p) \int_Q  |\nabla u_n(y)|^{p-1}\,\de y \Big)\\
&\, \phantom{\liminf_{n\to\infty} \bigg(}+ C_\psi\norma{D^s v_n}(Q)\bigg)\\
\leq&\, \liminf_{n\to\infty} \bigg( C_W(1+C_p)|B_1-B_2|\Big(1+|B_1|^{p-1}+|B_2|^{p-1}+\Big(\int_Q |\nabla u_n(y)|^p\,\de y\Big)^{\frac{p-1}p} \Big)\\
&\, \phantom{\liminf_{n\to\infty} \bigg(}+ C_\psi C_N|B_1-B_2|\bigg),\\
\end{split}\]
where we have used \eqref{pr2}, the sub-additivity property \eqref{(psi3)}, and the fact that $\nabla v_n=B_1-B_2$ in the second line, inequalities on powers and \eqref{pr4} in the third line, H\"{o}lder's inequality and the inequality in \eqref{817} in the fourth line. 
To treat the term containing $\nabla u_n$, we argue in the following way, letting $c_{W,p}\coloneqq \big(\max\{c_W^{-1},c_W^{-2}\}\big)^{\frac{p-1}p}$,
\[\begin{split}
\liminf_{n\to\infty}\bigg(\int_Q |\nabla u_n(y)|^p\,\de y\bigg)^{\frac{p-1}p}\leq&\, \liminf_{n\to\infty}\bigg(\frac1{c_W^2}+\frac1{c_W}\int_Q W(x,\nabla u_n(y))\,\de y\bigg)^{\frac{p-1}p} \\
\leq&\, \liminf_{n\to\infty} c_{W,p}\big(1+E_x^{\bulk}(u_n;Q)\big)^{\frac{p-1}p} = c_{W,p}(1+H(x,A,B_2))^{\frac{p-1}p} \\
\leq&\, c_{W,p}\big(1+\bar C_H(1+|A|+|B_2|^p)\big)^{\frac{p-1}p}\leq K \big(1+|A|^{\frac{p-1}p}+|B_2|^{p-1}\big)
\end{split}\]
where we have used the coercivity property \eqref{pr3}, the non-negativity of the surface energy, the fact that $\{u_n\}$ is a recovery sequence for $H(x,A,B_2)$, the upper bound in \eqref{ineq0}, and where we have called $K$ a constant depending on $p$, $c_{W,p}$, and $\bar C_H$.
Therefore we have, for a certain constant $C>0$,
$$H(x,A,B_1)-H(x,A,B_2)\leq C|B_1-B_2|(1+|B_1|^{p-1}+|B_2|^{p-1});$$
reversing the role of $B_1$ and $B_2$, we finally obtain \eqref{H_B}, namely that $H$ is $p$-Lipschitz continuous in the third variable.
As a consequence, $H$ is continuous in its arguments and the proof is concluded.
\end{proof}
%

\begin{remark}\label{rmkCF}
We observe the following facts.
\begin{itemize}
\item[(i)] We point out that the inequality from above in \eqref{ineq0} could have been equivalently deduced by the fact that $H$ satisfies \eqref{pr1}, \eqref{pr2}, and \eqref{H_B}.
\item[(ii)] We observe that \eqref{907} is $BV$-elliptic (see \cite[Definition~5.17]{AFP2000}): indeed, since $x\in\Omega$ is fixed, it arises as a formula identical to the homogeneous version of \cite[equation (11) and Proposition 2.2]{BDV1996} and \cite[equation (3.5), where the functional setting coincides with the current one]{AMMZ}. In light of this coincidence, the density~$h$ guarantees the lower semicontinuity of surface integrals also with respect to the $SBV_p$ convergence (see \cite{A1989} and \cite{AFP2000}). Roughly speaking one could say that $h$ is the $BV$-elliptic envelope of $\psi$; besides, the notion has been introduced in the purely brittle fracture context, \emph{i.e.} in the $SBV_p$ topology, (see \cite{BFLM,BF}). In particular, this observation goes in the direction that in the next relaxation iterations the $h$ remains unchanged.   
\item[(iii)] The above result still holds under weaker continuity conditions on the original bulk density $W$, thus enabling us to obtain a different, more analitically involved, energetic treatment of hierarchical (first-order) structured deformations. This will be the subject of a forthcoming paper.
\color{blue}
\end{itemize}
\end{remark}

\section{Hierarchical first-order structured deformations}\label{sec_hierarchies}
In this section we extend the Approximation Theorem~\ref{appTHM} and the integral representation Theorem~\ref{thm_CF} to hierarchical first-order structured deformations.
In order to achieve the approximation result, Theorem~\ref{appTHMh} below, an appropriate notion of convergence of a (multi-indexed) sequence of $SBV$ functions to an $(L+1)$-level (first-order) structured deformation $(g,G_1,\ldots,G_L)$ is introduced in Definition~\ref{S000}.
Then a recursive relaxation process is presented, in which Theorem~\ref{thm_CF} is recursively applied: in order to make this possible, Theorem~\ref{thm_propdens} guarantees that after each relaxation process, the densities obtained by freezing the microscopic gradient are still in the class $\ED(1)$, so that Theorem~\ref{thm_CF} can be applied one more time.

\begin{definition}
For $L\in\N{}$ and $\Omega\subset\R{N}$ a bounded connected open set, we define
$$HSD_L(\Omega)\coloneqq SBV(\Omega;\R{d})\times \underbrace{L^1(\Omega;\R{d\times N})\times\cdots\times L^1(\Omega;\R{d\times N})}_{L\text{-times}}$$
the set of \emph{$(L+1)$-level (first-order) structured deformations} on $\Omega$. 
For $p>1$, we define 
$$HSD_L^p(\Omega)\coloneqq SBV(\Omega;\R{d})\times \underbrace{L^p(\Omega;\R{d\times N})\times\cdots\times L^p(\Omega;\R{d\times N})}_{L\text{-times}}.$$
\end{definition}
An example of a three-level  structured deformation is provided by a stack of bundled papers (see \cite[Fig.~1]{DO2019}).
Deformations at the macrolevel (level 1) correspond to changes in shape of the entire stack, deformations at the first submacroscopic level (level 2) correspond to changes in shape of each of the bundles, and deformations at the second submacroscopic level (level 3) correspond to changes in shape of each single sheet of paper.
In this situation we have two levels of disarrangements,
the first  between bundled papers and the second between  sheets of paper.
Thus,  one can consider a macroscopic deformation field $g$ at each time, but also two
tensor fields $G_1$ and $G_2$ that provide the effects at the macrolevel of
geometrical changes without disarrangements at the submacroscopic levels 2 and 3, respectively, while the difference $\nabla g-G_{1}$ captures disarrangements in the form of slips or separations between adjacent bundles of papers, and the difference $G_{1}-G_{2}$ captures disarrangements in the form of slips or separations between individual sheets of papers.

\subsection{Approximation theorem for hierarchical (first-order) structured deformations}
The first result that we present is the definition of convergence of a sequence of $SBV$ functions to an $(L+1)$-level structured deformation $(g,G_1,\ldots,G_L)$ belonging to either $HSD_L(\Omega)$ or $HSD_L^p(\Omega)$. 
\begin{definition}\label{S000}
Let $L\in\N{}$, let $p>1$, let $(g,G_1,\ldots,G_L)\in HSD_L^{p}(\Omega)$, and let $\N{L}\ni(n_1,\ldots,n_L)\mapsto u_{n_1,\ldots,n_L}\in SBV(\Omega;\R{d})$ be a (multi-indexed) sequence. We say that $\{u_{n_1,\ldots,n_L}\}$ converges in the sense of $HSD_L^p(\Omega)$ to $(g,G_1,\ldots,G_L)$ if
\begin{itemize}
\item[(i)] $\displaystyle\lim_{n_1\to\infty}\cdots\lim_{n_L\to\infty} u_{n_1,\ldots,n_L} = g$, with each of the iterated limits in the sense of $L^1(\Omega;\R{d})$;
\item[(ii)] for all $\ell=1,\ldots,L-1$, 
$\displaystyle \lim_{n_{\ell+1}\to\infty}\cdots\lim_{n_L\to\infty} u_{n_1,\ldots,n_L} \eqqcolon g_{n_1,\ldots,n_\ell} \in SBV(\Omega;\R{d})$ and 
$$\lim_{n_1\to\infty}\cdots\lim_{n_{\ell}\to\infty} \nabla g_{n_1,\ldots,n_\ell}=G_{\ell},$$ 
with each of the iterated limits in the sense of weak convergence in  $L^p(\Omega;\R{d\times N})$;
\item[(iii)] $\displaystyle \lim_{n_1\to\infty}\cdots\lim_{n_L\to\infty} \nabla u_{n_1,\ldots,n_L} = G_L$ with each of the iterated limits in the sense of weak convergence in  $L^p(\Omega;\R{d\times N})$;
\end{itemize}
we use the notation $u_{n_1,\ldots,n_L}\wHSDKp(g,G_1,\ldots,G_L)$ to indicate this convergence.

If $(g,G_1,\ldots,G_L)\in HSD_L(\Omega)$ and if the weak $L^p$ convergences above are replaced by weak-* convergences in $\cM(\Omega;\R{d\times N})$, then we say that $\{u_{n_1,\ldots,n_L}\}$ converges in the sense of $HSD_L(\Omega)$ to $(g,G_1,\ldots,G_L)$ and we use the notation $u_{n_1,\ldots,n_L}\wHSDKstar(g,G_1,\ldots,G_L)$ to indicate this convergence.
\end{definition}

The sequential application of the idea behind the Approximation Theorem~\ref{appTHM} provides the method for constructing a (multi-indexed) sequence $\{u_{n_1,\ldots,n_L}\}$ that approximates an $(L+1)$-level structured deformation $(g,G_1,\ldots,G_L)$.
\begin{theorem}[Approximation Theorem for $(L+1)$-level structured deformations]\label{appTHMh}
Let  $(g,G_1,\ldots,G_L)$ belong to either $HSD_L(\Omega)$ or $HSD_L^p(\Omega)$. 
Then there exists a sequence $(n_1,\ldots,n_L)\mapsto u_{n_1,\ldots,n_L}\in SBV(\Omega;\R{d})$ 
converging to $(g,G_1,\ldots,G_L)$ in both of the senses of Definition~\ref{S000}.
\end{theorem}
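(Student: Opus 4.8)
The plan is to build the multi-indexed sequence by applying the single-level Approximation Theorem~\ref{appTHM} once for each level of the hierarchy, exploiting the refinement recorded in Remark~\ref{cvgp>1}: the approximating sequence can be chosen so that its absolutely continuous gradient equals the target field \emph{exactly} (see \eqref{nablaun=G}). This exactness is what lets the levels nest, since the field produced at level~$\ell$ is then a genuine $SBV$ function whose gradient is identically $G_\ell$, hence a legitimate macroscopic field to feed into the level-$(\ell{+}1)$ approximation.

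Concretely, set $G_0\coloneqq\nabla g$. For each $\ell=1,\dots,L$ the difference $G_{\ell-1}-G_\ell$ lies in $L^1(\Omega;\R{d\times N})$ (in the $HSD_L^p$ case because $L^p(\Omega)\hookrightarrow L^1(\Omega)$ on the bounded set~$\Omega$, and because $\nabla g\in L^1$). I would apply Theorem~\ref{Al} with $f\coloneqq G_{\ell-1}-G_\ell$ to obtain $w_\ell\in SBV(\Omega;\R{d})$ with $\nabla w_\ell=G_{\ell-1}-G_\ell$ and the jump estimate \eqref{817}, and then Lemma~\ref{ctap} to produce piecewise constant $\bar w_{\ell,n_\ell}\in SBV(\Omega;\R{d})$ with $\bar w_{\ell,n_\ell}\to w_\ell$ in $L^1$ and the total-variation control \eqref{818}. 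Mimicking \eqref{undef}, I then define
\[
u_{n_1,\dots,n_L}\coloneqq g+\sum_{\ell=1}^{L}\big(\bar w_{\ell,n_\ell}-w_\ell\big).
\]
Since each $\bar w_{\ell,n_\ell}$ is piecewise constant, its absolutely continuous gradient vanishes and the a.c.\ gradients telescope:
\[
\nabla u_{n_1,\dots,n_L}=\nabla g-\sum_{\ell=1}^{L}\big(G_{\ell-1}-G_\ell\big)=\nabla g-(G_0-G_L)=G_L
\]
$\cL^N$-a.e.\ in~$\Omega$, for every multi-index. Crucially, $w_\ell$ depends only on the fixed datum $G_{\ell-1}-G_\ell$, so it is index-independent, which is precisely what the recursive scheme demands.

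To verify Definition~\ref{S000}, observe that the index $n_\ell$ enters $u_{n_1,\dots,n_L}$ only through the single summand $\bar w_{\ell,n_\ell}-w_\ell$, which tends to~$0$ in $L^1$ as $n_\ell\to\infty$. Hence the iterated limits peel the summands off one at a time: for each $\ell$ one gets $g_{n_1,\dots,n_\ell}=\lim_{n_{\ell+1}}\cdots\lim_{n_L}u_{n_1,\dots,n_L}=g+\sum_{j=1}^\ell(\bar w_{j,n_j}-w_j)\in SBV(\Omega;\R{d})$, and the same telescoping gives $\nabla g_{n_1,\dots,n_\ell}=G_\ell$ identically. Condition (i) then follows because the full iterated limit $\lim_{n_1}\cdots\lim_{n_L}$ removes every summand, leaving~$g$; conditions (ii) and (iii) are immediate, since the relevant gradients are constant in every index and equal to $G_\ell$, respectively $G_L$, so their iterated weak limits in $L^p$—or weak-$*$ limits in $\cM(\Omega;\R{d\times N})$ in the $HSD_L$ case—are trivial. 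A uniform bound $|Du_{n_1,\dots,n_L}|(\Omega)\le C\sum_{\ell}(\dots)$ can be recorded if desired from \eqref{817}--\eqref{818} and the triangle inequality, exactly as in \eqref{819}.

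The construction is essentially bookkeeping once the right object is written down; the only point needing genuine care—and the conceptual heart of the argument—is the exact gradient identity $\nabla g_{n_1,\dots,n_\ell}=G_\ell$. If one relied only on the weak convergence $\nabla u_n\wsto G$ of the plain Approximation Theorem rather than on the equality \eqref{nablaun=G}, the level-$\ell$ field would have a gradient depending on all lower indices, the function $w_{\ell+1}$ fed into the next step would cease to be index-independent, and neither the telescoping nor the clean matching of the \emph{iterated} (as opposed to joint) limits of Definition~\ref{S000} would close. Thus the main obstacle is not a hard estimate but ensuring that each intermediate field is a bona fide $SBV$ structured deformation with gradient exactly~$G_\ell$, which is exactly what Remark~\ref{cvgp>1} supplies.
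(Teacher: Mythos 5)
Your proposal is correct and follows essentially the same route as the paper's proof: the same decomposition via Theorem~\ref{Al} applied to the differences $G_{\ell-1}-G_\ell$ (with $G_0\coloneqq\nabla g$), the same piecewise constant approximations from Lemma~\ref{ctap}, the same telescoping sum $u_{n_1,\ldots,n_L}=g+\sum_{\ell}(\bar w_{\ell,n_\ell}-w_\ell)$, and the same verification that the intermediate fields $g_{n_1,\ldots,n_\ell}$ have gradient identically $G_\ell$, making all iterated limits trivial. Your emphasis on the exact gradient identity \eqref{nablaun=G} as the key enabling point is precisely the mechanism the paper exploits.
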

\begin{proof}
We set $G_0\coloneqq\nabla g$, for convenience.
For every $\ell=1,\ldots,L$, let $u_\ell\in SBV(\Omega;\R{d})$ be the functions provided by Theorem~\ref{Al} such that $\nabla u_\ell=G_{\ell-1}-G_\ell$ and let $n_\ell\mapsto \bar u_{n_\ell}$ be the piecewise constant sequence approximating $u_\ell$ in $L^1(\Omega;\R{d})$ provided by Lemma \ref{ctap}.
We claim that the sequence of functions
\begin{equation}\label{S002}
u_{n_1,\ldots,n_L}\coloneqq g+\sum_{\ell=1}^L (\bar u_{n_\ell}-u_\ell),
\end{equation}
approximates $(g, G_1,\ldots,G_L)\in HSD_L(\Omega)$ in the sense of the convergence $\wHSDKstar$ or $(g, G_1,\ldots,G_L)\in HSD_L^p(\Omega)$ in the sense of the convergence $\wHSDKp$, see Definition~\ref{S000}.
Indeed,
\begin{equation}\label{T001}
\begin{split}
& \lim_{n_1\to\infty}\cdots\lim_{n_L\to\infty} u_{n_1,\ldots,n_L}=  \lim_{n_1\to\infty}\cdots\lim_{n_L\to\infty} \bigg( g+\sum_{\ell=1}^L (\bar u_{n_\ell}-u_\ell)\bigg) \\
=& \lim_{n_1\to\infty}\cdots\lim_{n_{L-1}\to\infty} \bigg( g+\sum_{\ell=1}^{L-1} (\bar u_{n_\ell}-u_\ell)\bigg)=\cdots=g, \qquad\text{in $L^1(\Omega;\R{d})$},
\end{split}
\end{equation}
proving (i). 
Using \eqref{T001}, we have that 
\begin{equation*}
\begin{split}
g_{n_1,\ldots,n_\ell} &\coloneqq \lim_{n_{\ell+1}\to\infty}\cdots\lim_{n_L\to\infty} u_{n_1,\ldots,n_L} =  \lim_{n_{\ell+1}\to\infty}\cdots\lim_{n_L\to\infty} \bigg( g+\sum_{j=1}^L (\bar u_{n_j}-u_j)\bigg) \\
& = g+\sum_{j=1}^\ell (\bar u_{n_j}-u_j) \in SBV(\Omega;\R{d}),
\end{split}
\end{equation*}
from which, observing that 
\begin{equation}\label{T001c}
\nabla g_{n_1,\ldots,n_\ell}= \nabla \bigg( g+\sum_{j=1}^\ell (\bar u_{n_j}-u_j) \bigg)=\bigg( \nabla g+\sum_{j=1}^\ell (G_j-G_{j-1}) \bigg)=G_0 +\sum_{j=1}^\ell (G_j-G_{j-1})=G_\ell,
\end{equation}
we immediately obtain that 
\begin{equation*}
\lim_{n_1\to\infty}\cdots\lim_{n_\ell\to\infty} \nabla g_{n_1,\ldots,n_\ell} = G_\ell,
\end{equation*}
both in the weak $L^p(\Omega;\R{d\times N})$ sense and in the weak-* $\cM(\Omega;\R{d\times N})$ sense, which is condition (ii) (equality \eqref{T001c} tells us that the sequence $\{\nabla g_{n_1,\ldots,n_\ell}\}$ is indeed a constant sequence). 
Finally, condition (iii) follows upon observing that 
\begin{equation}\label{T001d}
\nabla u_{n_1,\ldots,n_L}=\nabla g+\sum_{\ell=1}^L (G_\ell-G_{\ell-1})=G_L,
\end{equation}
so that 
$$\lim_{n_1\to\infty}\cdots\lim_{n_L\to\infty} \nabla u_{n_1,\ldots,n_L} = G_L,$$
both in the weak $L^p(\Omega;\R{d\times N})$ sense and in the weak-* $\cM(\Omega;\R{d\times N})$ sense (equality \eqref{T001d} tells us that the sequence $\{\nabla u_{n_1,\ldots,n_L}\}$ is indeed a constant sequence). 
The proof is concluded.
\end{proof}

\subsection{Recursive relaxation}\label{sec_RRP}

The assignment of an energy to an $(L+1)$-level structured deformation $(g,G_{1},\cdots ,G_{L})$ can be approached in a variety of ways.
The simple approach taken here is through a recursive scheme whose initial step assigns an energetic response $E_{L+1}$ to deformations $u_{L+1}$ at level $L+1$ through a bulk energy density $(x,A)\mapsto W_{0}(x,A)$ and an interfacial energy density $(x,\lambda ,\nu )\mapsto \psi _{0}(x,\lambda,\nu )$, with $(W_{0},\psi _{0})\in \ED(p)$, and then relaxes that energetic response to yield an energetic response $E_{L}$ to structured deformations $(\tilde{g}_{L},\tilde{G}_{L})$ at level~$L$. 
Here, the field  $\tilde{g}_{L}$ represents a deformation at level~$L$ and $\tilde{G}_{L}$ represents the amount of deformation at level~$L$ that is due to smooth deformations at level $L+1$, \emph{i.e.}, $\tilde{G}_{L}$ is the deformation at level~$L$ without $(L+1)$-level disarrangements. 
The energetic response $E_{L}$ to structured deformations $(\tilde{g}_{L},\tilde{G}_{L})$ is determined according to Theorem~\ref{thm_CF} by means of the cell formulas \eqref{rel_en_dens} for the relaxed bulk energy density~$W_{1}$ and the relaxed interfacial density~$\psi_{1}$ in terms of~$W_{0}$ and~$\psi _{0}$. 
Although the initial bulk energy density~$W_{0}$ has domain $\Omega \times\R{d\times N}$, the relaxed bulk density~$W_{1}$ has domain $\Omega \times\R{d\times N}\times\R{d\times N}$, so that the initial recursive step results in the introduction of an additional matrix variable for the relaxed bulk density that appears within the integrand in the expression for $E_{L}(\tilde{g}_{L},\tilde{G}_{L})$ as the argument~$\tilde{G}_{L}(x)$ of $W_{1}(x,\nabla \tilde{g}_{L}(x),\tilde{G}_{L}(x))$.
The context of Theorem~\ref{thm_CF} assures that the arguments of~$\psi _{1}$ are the same as those of~$\psi _{0}$.

The simplicity of the present approach lies in the freezing of the additional argument~$\tilde{G}_{L}(x)$ at a given but arbitrary matrix~$B_{L}$ for all subsequent steps which, accordingly, we describe as ``partial relaxations''. 
This allows us to replace $(x,A)\mapsto W_{0}(x,A)$ in the initial step by $(x,A)\mapsto W_{1}(x,A,B_{L})$ and to replace~$\psi_{0}$ by~$\psi _{1}$ to start the next recursive step. 
Theorem~\ref{thm_CF} then can be applied to $E_{L}$, with densities $(x,A)\mapsto W_{1}(x,A,B_{L})$ and~$\psi _{1}$, because Theorem~\ref{thm_propdens} guarantees that $(W_{1}(\cdot ,\cdot,B_{L}),\psi _{1})\in \ED(1)$, thereby accomplishing the first in the sequence of partial relaxations. 
We claim here that a single initial relaxation followed by~$L$ partial relaxations can be carried out, resulting in a pair $(W_{L}(\cdot ,\cdot ,B_{1},\cdots ,B_{L}),\psi _{L})$ of multiply
relaxed energy densities in terms of the~$L$ matrices $B_{1},\cdots ,B_{L}$ that were introduced, one-by-one, at successive steps during the recursive procedure. 
Consequently, the energy~$E_{1}$ at level~$1$, the macroscopic level, can be assigned to an $(L+1)$-level structured deformation $(g,G_{1},\cdots ,G_{L})$ via the formula
\begin{equation*}\label{level 1 energy}
E_{1}(g,G_{1},\cdots ,G_{L}) \coloneqq  \int_{\Omega} W_{L}(x,\nabla g(x),G_{1}(x),\cdots ,G_{L}(x))\,\de x 
+\int_{\Omega\cap S_g} \psi_{L}(x,[g](x),\nu_{g}(x))\,\de\cH^{N-1}(x).  
\end{equation*}%
This assignment of energy corresponds to a step-wise optimization as the number of levels covered by the hierarchical structured deformation is increased by one at each recursive relaxation step, while, as is detailed in Section~\ref{sec_CO}, other possible assignments of energy might optimize without explicit passage from one level to another and without freezing the values of any of the fields $G_{1},\cdots ,G_{L}$.

The discussion of recursive relaxation in the preceding paragraphs is preliminary to a recursive specification in which the index~$k$ for a recursive stage represents the level $L+1-k$ associated with submacroscopic refinement. 
For example, the recursive stage $k=0$ corresponds to the initial level $L+1$, the ``finest'' submacroscopic level, while the stage $k=L$ corresponds to level~$1$, the macroscopic level. 
In case the recursion should break down at some stage, we specify an integer $m\in \{1,\cdots,L\}$ to denote that stage and, accordingly, require that $k\in \{0,\cdots ,m\}$. 
Given $p>1$, $m\in \{0,\cdots ,L\}$, and $(W_{0},\psi _{0})\in \ED(p)$, we specify recursively pairs $(W_{k},\psi _{k})$ for $k\in \{1,\cdots ,m\}$ with
\begin{equation}\label{kth relaxed densities}
W_{k}\colon\Omega \times\big(\R{d\times N}\big)^{k+1}\to[0,+\infty)\quad\text{and}\quad \psi_{k}\colon\Omega \times \R{d}\times\S{N-1}\to[0,+\infty )
\end{equation}%
emerging from the following counterparts of the cell formulae \eqref{rel_en_dens} in which, for notational convenience, we write $\bB_{k}\coloneqq(B_{k},\ldots,B_{1})\in (\R{d\times N})^k$ for $k\in\{1,\ldots,m\}$.
Hence the cell formulas at stage $k$ are, for all $A_k,B_{1},\ldots ,B_{k}\in\R{d\times N}$,%
\begin{equation}\label{Wkpsik}
\begin{split}
W_{k}(x,A_{k},\mathbf{B}_{k})\coloneqq&\, \inf \bigg\{\int_Q W_{k-1}(x,\nabla u(y),\mathbf{B}_{k-1})\,\de y \\
&\, \phantom{\inf\bigg\{}+ \int_{Q\cap S_{u}}\psi _{k-1}(x, [u](y),\nu_{u}(y))\,\de\cH^{N-1}(y): u\in \cC^{\bulk}(A_k,B_k)\bigg\}, \\
\psi _{k}(x,\lambda ,\nu) \coloneqq&\, \inf \bigg\{\int_{Q_{\nu }\cap S_{u}}\psi_{k-1}(x,[u](y),\nu_{u}(y))\,\de\cH^{N-1}(y):u\in \cC^\surface(\lambda,\nu)\bigg\}.
\end{split}
\end{equation}
We note that in the passage from the bulk energy $W_{k-1}$ to $W_{k}$, the expression $W_{k-1}(x, A_{k-1},B_{k-1},\ldots ,B_{1})$ is replaced by $W_{k}(x, A_{k},B_{k},B_{k-1},\ldots ,B_{1})$, so that the single matrix $A_{k-1} $ in the arguments of $W_{k-1}$ is replaced by the pair of matrices $(A_{k},B_{k})$ in the arguments of $W_{k}$.

In particular, defining 
\begin{equation}\label{W_K^B}
\Omega\times\R{d\times N}\ni(x,A)\mapsto W_{k}^{\bB_k}(x,A)\coloneqq W_k(x,A,\bB_{k}),
\end{equation}
by Theorem~\ref{thm_propdens} $(W_k^{\bB_k},\psi_k)\in\ED(1)$ for every $k\in\{0,\ldots,m\}$.
Due to this fact, there is no stopping point to this process which allows us to consider energies associated to hierarchies of structured deformations with as many levels as needed.

For each $(L+1)$-level structured deformation $(g,G_{1},\cdots ,G_{L})$, for each level $\ell\in \{1,\cdots,L\}$, and for each $\tilde{g}_{\ell }\in SBV(\Omega ,\R{d})$, the $(L+2-\ell)$-tuple $(\tilde{g}_{\ell },G_{\ell},\cdots ,G_{L})$ is an $(L+2-\ell)$-level structured deformation to which we assign the (partially) relaxed energy
\begin{equation}\label{EHSDk}
\begin{split}
E_{\ell}(\tilde g_\ell, G_\ell,\dots, G_{L})\coloneqq& \int_\Omega W_{L+1-\ell}(x,\nabla \tilde g_\ell(x), G_{\ell}(x),\dots, G_L(x))\,\de x \\
&\,+\int_{\Omega \cap S_g}  \psi_{L+1-\ell}(x, [\tilde g_\ell](x),\nu_{\tilde g_\ell}(x))\,\de\cH^{N-1}(x).
\end{split}
\end{equation}
We observe that the arguments given in the proof of Theorem \ref{thm_propdens} to obtain property \eqref{W_cara} for the function $x\mapsto H(x, A,B)$, the Lipschitz continuity of $A\mapsto H(x,A, B)$ (see \eqref{LCA}), and the $p$-Lipschitz continuity of $B\mapsto H(x, A,B)$ (see \eqref{H_B}), applied to $W_{L+1-\ell}$,  guarantee the measurability and the integrability of the integrand $x\mapsto W_{L+1-\ell}(x,\nabla \tilde{g}_{\ell }(x),G_{\ell }(x),\cdots ,G_{L}(x))$, so that the energy \eqref{EHSDk} is well defined.

The reasoning that we have just carried out provides a proof of the following statement.
\begin{theorem}[Well-posedness of the recursive relaxation]\label{thm_RR}
Let $p>1$ and let $(W_0,\psi_0)\in\ED(p)$; let $L\in\N{}$ and $(g,G_1,\ldots,G_L)\in HSD_L^p(\Omega)$. 
Then for every level $\ell\in\{1,\cdots,L\}$ and $\tilde{g}_{\ell}\in SBV(\Omega;\R{d})$ the (partially) relaxed energy $E_{\ell }(\tilde{g}_{\ell },G_{\ell },\cdots,G_{L})$ at level $\ell $ in \eqref{EHSDk} is well defined.
\end{theorem}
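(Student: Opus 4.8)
The claim that $E_\ell$ is \emph{well defined} means precisely that both integrals in \eqref{EHSDk} make sense, \emph{i.e.}, that the two integrands are measurable (with respect to $\cL^N$ and $\cH^{N-1}$, respectively) and that the resulting integrals are finite. Writing $k\coloneqq L+1-\ell\in\{1,\ldots,L\}$, the densities at play are $W_k$ and $\psi_k$ from the cell formulas \eqref{Wkpsik}, and the plan is to transfer to them, one stage at a time, the regularity and the growth that Theorem~\ref{thm_propdens} establishes for a single relaxation.

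First I would record the structural facts produced by the recursion. By the observation following \eqref{W_K^B}, applying Theorem~\ref{thm_propdens} at each stage yields $(W_j^{\bB_j},\psi_j)\in\ED(1)$ for every $j\in\{0,\ldots,k\}$; in particular $\psi_k$ is continuous and obeys $0\leq\psi_k(x,\lambda,\nu)\leq C_\psi|\lambda|$ by \eqref{pr4}, while $W_k$ inherits, in each of its matrix slots, the regularity proved in Theorem~\ref{thm_propdens}: continuity in $x$ with a modulus as in \eqref{pr1}, Lipschitz continuity in the gradient slot as in \eqref{LCA}, and Lipschitz continuity in each frozen slot as in \eqref{H_B}. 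Together these make $W_k$ jointly continuous in its matrix arguments and continuous in $x$, hence a Carath\'eodory integrand; composing it with the $\cL^N$-measurable map $x\mapsto(\nabla\tilde g_\ell(x),G_\ell(x),\ldots,G_L(x))$ gives a measurable function, and the analogous reasoning for $\psi_k$ together with the $\cH^{N-1}$-measurable maps $x\mapsto[\tilde g_\ell](x)$ and $x\mapsto\nu_{\tilde g_\ell}(x)$ disposes of the measurability of both integrands.

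For integrability I would propagate the two-sided bound \eqref{ineq0} through the recursion by induction on $j$. The base case $j=1$ is \eqref{ineq0} itself; for the inductive step I would use, as a competitor in the infimum defining $W_j(x,A,\bB_j)$ in \eqref{Wkpsik}, the sequence furnished by Theorem~\ref{appTHM} for $(a_A,B_j)$---exactly as in the derivation of \eqref{ineq0}---so that $\nabla u_n=B_j$ is constant and $|D^s u_n|(Q)\leq C\norma{(a_A,B_j)}_{SD(Q)}$; bounding the surface contribution by $C_\psi|D^s u_n|(Q)$ via \eqref{pr4} and the bulk contribution by the inductive hypothesis for $W_{j-1}(x,B_j,\bB_{j-1})$, one reaches a bound of the form
\[
W_k(x,A,B_k,\ldots,B_1)\leq C\big(1+|A|+|B_k|+\cdots+|B_2|+|B_1|^p\big),
\]
in which the coercivity exponent has dropped from $p$ to $1$ after the first relaxation. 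Since $\tilde g_\ell\in SBV(\Omega;\R{d})$ forces $\nabla\tilde g_\ell\in L^1(\Omega;\R{d\times N})$ and each $G_j\in L^p(\Omega;\R{d\times N})\subset L^1(\Omega;\R{d\times N})$ on the bounded set $\Omega$, this bound is integrable and the bulk integral in \eqref{EHSDk} is finite; the surface integral is finite because $\psi_k(x,[\tilde g_\ell],\nu_{\tilde g_\ell})\leq C_\psi|[\tilde g_\ell]|$ and the jump $[\tilde g_\ell]$ is $\cH^{N-1}$-integrable on the jump set $S_{\tilde g_\ell}$ of the $SBV$ function $\tilde g_\ell$.

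The genuine analytic difficulty lies not in this statement but in Theorem~\ref{thm_propdens}, which is what keeps the densities in $\ED(1)$ at every stage and supplies the Lipschitz and growth estimates; what remains here is essentially bookkeeping. The one point deserving care is that the growth exponent is not uniform across the frozen variables---it is $p$ for the variable introduced at the first, genuinely $\ED(p)$, relaxation and linear for all later ones---so I would set up the induction to track this drop correctly. Once that is in place, the inclusions $G_j\in L^p\subset L^1$ render the differing exponents harmless for integrability, and the well-posedness of \eqref{EHSDk} follows.
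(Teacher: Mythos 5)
Your proof is correct and follows essentially the same route as the paper: the paper likewise deduces well-posedness from the recursive application of Theorem~\ref{thm_propdens} (which keeps $(W_k^{\bB_k},\psi_k)\in\ED(1)$ at every stage, so the recursion never stops) together with the continuity and Lipschitz properties \eqref{pr1}, \eqref{LCA}, and \eqref{H_B} applied to $W_{L+1-\ell}$, which yield measurability and integrability of the integrand in \eqref{EHSDk}. Your explicit induction producing the growth bound $C\big(1+|A|+|B_k|+\cdots+|B_2|+|B_1|^p\big)$ merely spells out what the paper obtains implicitly from those Lipschitz estimates (cf.\ Remark~\ref{rmkCF}(i)), so the two arguments coincide in substance.
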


\subsection{An explicit example}\label{sec_EX}
We present here an example of the recursive process described in Section~\ref{sec_RRP} that, although not strictly covered by the hypotheses of Theorem~\ref{thm_RR}, is also well-posed in the sense of Theorem~\ref{thm_RR} and yields explicit formulas for the relaxed energy at every stage of the recursion. 
We set $d=N$, let $p>1$, and consider initial energy densities $W_0\colon\R{N\times N}\to[0,+\infty)$ convex and satisfying property \eqref{(W1)_p} 
of Definition~\ref{gen_ass} with $q=p$ and $\psi_0\colon\R{N}\times\S{N-1}\to[0,+\infty)$ given by
\begin{equation}\label{example interfacial energy}
\psi_0(\lambda ,\nu )=\left\vert \lambda \cdot \nu \right\vert .
\end{equation}%
Because $p>1$ and because $\psi _{0}$ satisfies \eqref{(psi2)} and \eqref{(psi3)} in Definition~\ref{gen_ass}, the conclusions of Theorem~\ref{thm_CF}, including the cell formulas \eqref{rel_en_dens}, apply in this example (see Remark~\ref{rem 3.2}). 
It is proved in \cite[Proposition~3.6]{MMO} that under these hypotheses we have the following form of the relaxed bulk energy density in \eqref{906}, which we will denote here by $W_1$,
\begin{equation}\label{3.13}
W_1(A_1,B_1)=W_0(B_1)+\inf\bigg\{\int_{Q\cap S_u} \psi_0([u](y),\nu_u(y))\,\de\cH^{N-1}(y): u\in \cC^{\bulk}(A_1,B_1)\bigg\}.
\end{equation}
The minimization problem in the right-hand side of formula \eqref{3.13} was solved in \cite{nogap,OP2015}, where it was proved that 
$$\inf\bigg\{\int_{Q\cap S_u} \psi_0([u](y),\nu_u(y))\,\de\cH^{N-1}(y): u\in \cC^{\bulk}(A_1,B_1)\bigg\}=|\tr(A_1-B_1)|$$
(see \cite[Theorem~2.3]{S17} for a general formula).
The results in \cite{OP2015,S17} also give the explicit expression for the relaxed surface energy density in \eqref{907}, which we will denote here by $\psi_1$, 
$$\psi_1(\lambda,\nu)=\psi_0(\lambda,\nu)=|\lambda\cdot\nu|.$$
(It is worth to underline that $\psi_0$ is jointly convex, hence $BV$-elliptic (see \cite[Definition~5.17 and Theorem~5.20]{AFP2000} and Remark~\ref{rmkCF}(ii)), hence $\psi_1=h= \psi_0$.)
Therefore we have
\begin{equation}\label{example densities 1}
\begin{split}
W_{1}(A_{1},B_{1}) =&\,W_0(B_{1})+\vert \tr(A_{1}-B_{1})\vert \\
\psi _{1}(\lambda ,\nu ) =&\,\vert \lambda \cdot \nu \vert. 
\end{split}
\end{equation}
Upon noticing that the function $\R{N\times N}\ni A\mapsto W_1^{B_1}(A)\coloneqq W_{1}(A ,B_{1})$ is also convex and satisfies \eqref{(W1)_p} in Definition~\ref{gen_ass} with $q=p$, we may then perform a second relaxation via the argument that led to \eqref{example densities 1} to obtain the relaxed densities 
\begin{equation}\label{example densities 2}
\begin{split}
W_{2}(A_{2},B_{2,}B_{1}) =&\,W_{1}^{B_1}(B_{2})+\vert\tr(A_{2}-B_{2})\vert =W_1(B_2,B_1)+|\tr(A_2-B_2)| \\
&\, =W_0(B_{1})+\vert\tr(B_{2}-B_{1})\vert +\vert\tr(A_{2}-B_{2})\vert  \\
\psi _{2}(\lambda ,\nu) =&\,\vert \lambda \cdot \nu \vert.
\end{split}
\end{equation}%
It is straightforward to show by induction that the recursive relaxation determined by $W_0$ and $\psi_0$ in this example 
admits relaxed densities at the $k^{\text{th}}$ recursive stage given by%
\begin{subequations}\label{example_k}
\begin{eqnarray}
W_{k}(A_{k},\bB_{k}) &\!\!\!\!=&\!\!\!\! \vert \tr(A_{k}-B_{k})\vert+\sum_{j=2}^{k}\vert \tr(B_{j}-B_{j-1})\vert +W_0(B_{1}) \label{example bulk k} \\
\psi _{k}(\lambda ,\nu ) &\!\!\!\!=&\!\!\!\! \vert \lambda \cdot \nu\vert. \label{example interfacial k}
\end{eqnarray}%
\end{subequations}
In this example, the $L$-fold relaxed bulk and interfacial energy densities $W_{L}$ and $\psi_{L}$ just obtained can be inserted in \eqref{EHSDk} to assign the energy $E_{1}$ to a hierarchical structured deformation $(g,G_{1},\ldots,G_{L})$
\begin{equation}\label{EK energy}
E_{1}(g,G_{1},\ldots G_{L}) \coloneqq \int_{\Omega} W_{L}(\nabla g(x),G_{1}(x),\ldots ,G_{L}(x)) \,\de x +\int_{\Omega \cap S_{g}} \vert [g](x)\cdot \nu_{g}(x)\vert \,\de\cH^{N-1}(x).  
\end{equation}%
If we put $G_{0}\coloneqq \nabla g$ and use \eqref{example bulk k} with $k=L$, the volume integrand  $W_{L}(\nabla g,G_{1},\ldots ,G_{L})$ in \eqref{EK energy} can be written as
\begin{equation}\label{bulk energy integrand}
\sum_{\ell=1}^{L}\vert \tr(G_{\ell}-G_{\ell-1})\vert +W_0(G_{L}).  
\end{equation}%
The matrix-valued fields $G_{\ell-1}-G_{\ell}$ in \eqref{bulk energy integrand} are the contributions to the macroscopic deformation gradient~$\nabla g$ from the disarrangements arising in upscaling from  submacroscopic level~$\ell$ to submacroscopic level~$\ell-1$, so that the sum over~$\ell$ represents a bulk energy density arising from hierarchical disarrangements.  
The remaining term $W_0(G_{L})$ represents a bulk energy density arising from the contributions at the macrolevel of the deformation without disarrangements at submacroscopic level~$L$.

\section{Conclusions and outlook}\label{sec_CO}
In this work we have taken the first steps towards a variational theory for hierarchical  structured deformations, introduced by Deseri and Owen in \cite{DO2019}.
In the spirit of the variational framework introduced by Choksi and Fonseca in \cite{CF1997}, we proved two main results: (i) the Approximation Theorem~\ref{appTHMh}, stating that each $(L+1)$-level structured deformation $(g,G_1,\ldots,G_L)$ can be approximated by a (multi-indexed) sequence $\{u_{n_1,\ldots,n_L}\}\subset SBV(\Omega;\R{d})$ in the precise sense introduced in Definition~\ref{S000}; (ii) the recursive relaxation procedure described in Section~\ref{sec_RRP} and culminating in Theorem~\ref{thm_RR}, stating that the energy assigned to an $(L+1)$-level structured deformation $(g,G_1,\ldots,G_L)$ by means of iterated applications of the integral representation Theorem~\ref{thm_CF} is indeed well defined; this second result rests on Theorem~\ref{thm_propdens}, the main technical effort of this paper, in which we prove that the relaxed bulk and surface energy densities still belong to the class of energies that can be relaxed. It is important to notice that these findings provide a validation of the proposal initiated in \cite{DO2019}, motivating the introduction of the notion of hierarchical structured deformations to model materials exhibiting multiple submacroscopic levels.

The Approximation Theorem~\ref{appTHM} for first-order structured deformations $(g,G)$ is crucial for the relaxation process \eqref{910}, since it implies that the class $\cR_p(g,G)$ defined in \eqref{R_p} is not empty, so that \eqref{910} is a meaningful problem. The integral representation Theorem~\ref{thm_CF} then shows that starting from an initial energy $E$ as in \eqref{103}, the relaxed energy $I(g,G)$ assigned to $(g,G)$ retains the integral form, and provides the cell formulae \eqref{rel_en_dens} for the relaxed bulk and surface energy densities $H$ and $h$, respectively. 

At the moment, Theorems~\ref{appTHMh} and~\ref{thm_RR} work independently: the way we assign the energy to the $(L+1)$-level structured deformation $(g,G_1,\ldots,G_L)$ is independent of the fact that $(g,G_1,\ldots,G_L)$ can be approximated by means of the sequence $\{u_{n_1,\ldots,n_L}\}$. Thanks to Theorem~\ref{thm_propdens}, our proposal for defining $E_1(g,G_1,\ldots,G_L)$ via \eqref{EHSDk} (for $\ell=1$) is meaningful, as formula \eqref{EK energy} shows.

It would be amenable, and extremely interesting from the mathematical point of view, to define the energy of an $(L+1)$-level structured deformation $(g,G_1,\ldots,G_L)\in HSD_L(\Omega)$ via relaxation of the initial energy $E$ of \eqref{103} directly, namely by
\begin{equation}\label{E_K rel}
I_1(g,G_1,\ldots,G_L)\coloneqq\inf\Big\{\liminf_{n\to\infty} E(u_{n_1,\ldots,n_L}): \{u_{n_1,\ldots,n_L}\}\in\cR_p(g,G_1,\ldots,G_L)\Big\},
\end{equation}
where a possible definition of the class $\cR_p$ in this context could be
\begin{equation}\label{R_p h}
\begin{split}
\cR_p(g,G_1,\ldots,G_L)\coloneqq\Big\{\{u_{n_1,\ldots,n_L}\}\subset SBV(\Omega;\R{d}): &\, u_{n_1,\ldots,n_L}\wHSDKstar(g,G_1,\ldots,G_L), \\
&\,\sup_{n_1,\ldots,n_L\in\N{L}}\norma{\nabla u_{n_1,\ldots,n_L}}_{L^p(\Omega;\R{d\times N})}<+\infty\Big\}.
\end{split}
\end{equation}
The Approximation Theorem~\ref{appTHMh} grants that the class $\cR_p$ defined in \eqref{R_p h} is non empty, so that, also in the context of hierarchies of structured deformations, a definition by relaxation, such as \eqref{E_K rel} is meaningful. The properties of such a definition are object of research in progress.
Whether the energy $I_1$ admits an integral representation and in which fashion the energy densities depend on the fields $\nabla g$ and $G_1,\ldots,G_L$, and on the jump $[g]$ is still unknown at the moment, as is the relationship between the energies $E_1$ of \eqref{EHSDk} (for $\ell=1$) and $I_1$. It seems not unreasonable to expect the latter to be smaller than the former: in the relaxation process defined in \eqref{E_K rel}, the energy $I_1$ is given as the most energetically convenient way of reaching $(g,G_1,\ldots,G_L)$, whereas in \eqref{EHSDk}, the energy $E_1$ is the outcome of the iteration of the relaxation process \eqref{910}, in which only one level at a time is upscaled, keeping the others frozen. This is evident in the way we define the bulk energy densities $W_k$ at each stage by \eqref{W_K^B}, by keeping the $k$ fields $\bB_k=(B_k,\ldots,B_1)$ fixed, thus introducing some constraints.
On the other hand, as pointed out in Remark~\ref{rmkCF}(ii), the properties listed in Theorem~\ref{thm_propdens} remain valid under weaker continuity assumptions on the original bulk density. The study of the energy $I_1$ is undertaken under these weaker conditions and is the subject of a forthcoming paper.

\subsection*{Acknowledgments} The authors warmly thank the hospitality of the departments of mathematics of Carnegie Mellon University and of Politecnico di Torino, where part of this research was developed.
MM and EZ are members of the Gruppo Nazionale per l'Analisi Matematica, la Probabilit\`a e le loro Applicazioni (GNAMPA) of the Istituto Nazionale di Alta Matematica ``F.~Severi'' (INdAM). They also acknowledge partial funding from the GNAMPA Project 2019 \emph{Analysis and optimisation of thin structures}.

The research of ACB was partially supported by National Funding from FCT--Funda\c{c}\~{a}o para a Ci\^{e}ncia e a Tecnologia, through project UIDB/04561/2020.
The research of JM is partially supported by FCT/Portugal through CAMGSD, IST-ID, projects UIDB/04459/2020 and UIDP/04459/2020.
The work of MM is partially supported by the MIUR project \emph{Dipartimenti di Eccellenza 2018-2022} (CUP E11G18000350001) and by the \emph{Starting grant per giovani ricercatori} of Politecnico di Torino.
EZ is indebted to IST-Universidade de Lisboa for their kind support and hospitality. She also acknowledges  GNAMPA support for travel expenses in 2022.


\bibliographystyle{plain}

\end{document}